\def\BibTeX{{\rm B\kern-.05em{\sc i\kern-.025em b}\kern-.08em
    T\kern-.1667em\lower.7ex\hbox{E}\kern-.125emX}}
\newtheorem{Def}{\sc Definition}
\newtheorem{Thm}{\sc Theorem}
\newtheorem{Cor}[Thm]{\sc Corollary}
\newtheorem{Pro}[Thm]{\sc Proposition}
\begin{document}

\title{Fra\"iss\'e classes of graded relational structures%\thanks{Grants or other notes
%about the article that should go on the front page should be
%placed here. General acknowledgments should be placed at the end of the article.}
}
\subtitle{}

%\titlerunning{Short form of title}        % if too long for running head

\author{Guillermo Badia        \and
        Carles Noguera %etc.
}

%\authorrunning{Short form of author list} % if too long for running head

\institute{Guillermo Badia \at
             Department of Knowledge-Based Mathematical Systems \\
        Johannes Kepler University Linz\\
              \email{guillebadia89@gmail.com}            %  \\
%             \emph{Present address:} of F. Author  %  if needed
           \and
Carles Noguera \\
          Institute of Information Theory and Automation \\    
        Czech Academy of Sciences\\
              \email{noguera@utia.cas.cz}  
}

\date{Received: date / Accepted: date}
% The correct dates will be entered by the editor

\maketitle

\begin{abstract}
We study classes of graded structures satisfying the properties of amalgamation, joint embedding and hereditariness. Given appropriate conditions, we can build a graded analogue of the Fra\"iss\'e limit. Some examples such as the class of all finite weighted graphs  or the class of all finite fuzzy orders (evaluated on a particular countable algebra) will be examined.  
\keywords{mathematical fuzzy logic \and fuzzy structure \and Fra\"iss\'e limit \and fuzzy order \and weighted graphs \and graded model theory}
% \PACS{PACS code1 \and PACS code2 \and more}
% \subclass{MSC code1 \and MSC code2 \and more}
\end{abstract}

\section*{Introduction}
In classical model theory any $n$-ary relation $R$ on a universe (or base) $E$ can be seen as a map from $E^n$ into the two-element chain {\bf 2} and a relational structure is simply a collection of relations of any arity with the same universe \cite{Poizat}. 

One can replace the two-element chain by a richer lattice, as done in fuzzy set theory and in many-valued logics, typically based on the interval $[0, 1]$ but also on arbitary lattice-ordered sets \cite{za,g}.  We can call such relations \emph{graded} because they allow elements (or tuples of elements) of the domain satisfy the relation at different levels in a graded scale.
Mathematical fuzzy logic \cite{haj} studies logics of graded relations as particular kinds of many-valued inference systems in several formalisms, including first-order predicate languages. Structures for such first-order graded logics are variations of classical structures in which predicates are interpreted as graded relations. Such structures are relevant for recent computer science developments in which they are called {\em weighted structures} (see e.g.~\cite{HoMoVi:VCSP}).

In the landmark paper \cite{f}, Fra\"iss\'e set out to generalize some properties of some classes of ordered structures. For instance, he observed that the relational structure $\tuple{\mathbb{Q},<}$ is, in a certain sense, the limit of the class of all finite linear orderings and such limit is unique (by virtue of an argument following Cantor's back and forth theorem). Fra\"iss\'e managed to identify the structural properties of the class of finite linear orders that allowed the existence of such a limit by introducing the construction of a structure henceforth known as {\em Fra\"iss\'e limit} \cite{h}  for any class of relational structures satisfying certain suitable properties (a {\em Fra\"iss\'e class}). The core idea here is that in certain circumstances a structure may be built from its finite parts.

The aim of the present paper is to introduce and study Fra\"iss\'e classes and limits for graded structures. In order to put our work in perspective, it is worth mentioning that  the study of the structures of first-order fuzzy logics is based on the corresponding strong completeness theorems \cite{cin,cin1} and has already addressed several crucial topics such as: characterization of completeness properties with respect to models based on particular classes of algebras~\cite{Cintula-EGGMN:DistinguishedSemantics}, models of logics with evaluated syntax~\cite{Novak-Perfilieva-Mockor:2000,MuNo}, study of mappings and diagrams~\cite{de3}, ultraproduct constructions~\cite{Dellunde:RevisitingUltraproducts}, characterization of elementary equivalence in terms of elementary mappings~\cite{de}, L\"owenheim--Skolem theorems~\cite{de2}, and back-and-forth systems for elementary equivalence \cite{de}. Continuous model theory~\cite{Chang-Keisler:ContinuousModelTheory,Caicedo:StrengthLukasiewicz} is a very related area of research that focuses on models over algebras with continuous operations (in this context, Fra\"iss\'e's construction has been studied in~\cite{Ya}).

The paper is organized as follows. In \S \ref{1}, we look in detail at graded structures and related notions such as that of substructures and embeddings. In \S \ref{2}, we provide some examples of classes of graded structures that will be relevant later and we introduce the properties of amalgamation, hereditariness and joint embedding which will be needed for the main theorem of the paper. In  \S \ref{3}, we establish the main of result of the paper: a graded version of the theorem by Fra\"iss\'e showing the construction of the limit structure of suitable classes of structures, besides mentioning a couple of other related facts. In  \S \ref{4}, we prove that some of the classes of structures introduced earlier are in fact suitable for applications of Fra\"iss\'e's theorem; in particular, we present a graded analogue of the random graph which is the Fra\"iss\'e limit of weighted finite graphs evaluated on a fixed countable algebra. Finally, in \S \ref{5} we end with some concluding remarks.

\section{Graded structures}\label{1}
In this section, we begin by introducing in detail the object of our study: graded structures, and several necessary related notions for the development of the paper. For further information the reader may consult the handbook series~\cite{Cintula-FHN:HBMFL}.

We choose, as the underlying algebraic setting, the class of residuated uninorms~\cite{Metcalfe-Montagna:SubstructuralFuzzy}. Most of the well-studied particular systems of fuzzy logic that can be found in the literature can be given a semantics on these algebras (cf.~\cite{Cintula-Noguera-Smith:GradedLORI}).

The algebraic semantics of such logics is based on {\em \UL-algebras}, that is, algebraic structures in the language $\mathcal{L} = \{\wedge,\vee,\conj,\to,\0,\1,\bot,\top\}$ of the form
$\alg{A}=\tuple{A,\wedge^\alg{A},\vee^\alg{A},\conj^\alg{A},\to^\alg{A},\0^\alg{A},\1^\alg{A},\bot^\alg{A},\top^\alg{A}}$ such that
\begin{flushleft}
\begin{itemize}
\item $\tuple{A,\wedge^\alg{A},\vee^\alg{A},\bot^\alg{A},\top^\alg{A}}$ is a bounded lattice,
\item $\tuple{A,\conj^\alg{A},\1^\alg{A}}$ is a commutative monoid,
\item for each $a,b,c\in A$, we have:
\begin{align*}
& a\conj^\alg{A} b\leq c  \quad \mathrm{iff }  \quad  b\leq a \to^\alg{A} c, & \mathrm{(res)}\\
&((a \to^\alg{A} b)\land\1^\alg{A}) \lor^\alg{A} ((b \to^\alg{A} a)\land^\alg{A}\1^\alg{A})  = \1^\alg{A} & \mathrm{(lin)}
\end{align*}
\end{itemize}
\end{flushleft}

%It is interesting to observe that in such algebras the lattice order can be described in terms of $\to$ in the following way: $a\leq b$ iff $(a \to^\alg{A} b) \land^\alg{A} \1^\alg{A} = \1^\alg{A}$.

$\alg{A}$ is called a {\em \UL-chain} if its underlying lattice is linearly ordered. {\em Standard} \UL-chains are those define over the real unit interval $[0,1]$ with its usual order; in that case the operation $\conj^\alg{A}$ is a residuated uninorm, that is, a left-continuous binary associative commutative monotonic operation with a neutral element $\1^\alg{A}$ (which need not coincide with the value $1$).

Let $\fm$ denote the set of propositional formulas written in the language of \UL-algebras with a denumerable set of variables and let $\FM$ be the absolutely free algebra defined on such set. Given a \UL-algebra $\alg{A}$, we say that an {\em $\alg{A}$-evaluation} is a homomorphism from $\FM$ to $\alg{A}$.  The logic of all \UL-algebras is defined by establishing, for each $\Gamma \cup \{\f\} \subseteq \fm$, $\Gamma \models \f$ if and only if, for each \UL-algebra $\alg{A}$ and each $\alg{A}$-evaluation $e$, we have $e(\f) \geq \1^\alg{A}$, whenever $e(\p) \geq \1^\alg{A}$ for each $\p \in \Gamma$. The logic \UL\ is, hence, defined as preservation of truth over all \UL-algebras, where the notion of truth is given by the set of designated elements, or {\em filter}, $\mathcal{F}^{\alg{A}} = \{a \in A \mid a \geq \1^\alg{A}\}$. The standard completeness theorem of \UL\ proves that the logic is also complete with respect to its intended semantics: the class of \UL-chains defined over $[0,1]$ by residuated uninorms (the standard \UL-chains); this justifies the name of \UL\ (uninorm logic).

Most well-known propositional fuzzy logics can be obtained by extending \UL\  with additional axioms and rules (in a possibly expanded language). Important examples are G\"odel--Dummett logic \G\ and \L ukasiewicz logic $\mathrmL$. 

A {\em predicate language} $\pl$ is a triple $\tuple{\mathbf{P,F,ar}}$, where $\mathbf{P}$ is a non-empty set of predicate symbols, $\mathbf{F}$ is a set of functional symbols, and $\mathbf{ar}$ is a function assigning to each symbol a natural number called the \emph{arity} of the symbol. Let us further fix a denumerable set $V$ whose elements are called {\em object variables}. The sets of {\em $\pl$-terms}, {\em atomic $\pl$-formulas}, and {\em $\tuple{\lang{L},\pl}$-formulas} are defined as usual with the propositional connectives being those of \UL. A {\em \pl-structure} $\model{M}$ is a pair $\tuple{\alg{A},\struct{M}}$ where $\alg{A}$ is a \UL-chain and $\struct{M} = \tuple{M,\Tuple{P_\struct{M}}_{P\in\mathbf{P}},\Tuple{F_\struct{M}}_{F\in\mathbf{F}}}$, where
$M$ is a  set; $P_\struct{M}$ is a function $M^n\to A$, for each $n$-ary predicate symbol $P\in\mathbf{P}$; and $F_\struct{M}$ is a function $M^n\to M$ for each
$n$-ary functional symbol $F\in\mathbf{F}$. An {\em $\model{M}$-evaluation} of the object variables is a mapping $\eval{v} \colon V \to M$; by $\eval{v}[x{\to}a]$ we denote the $\model{M}$-evaluation where $\eval{v}[x{\to}a](x) = a$ and $\eval{v}[x{\to}a](y) = \eval{v}(y)$ for each object variable $y\neq x$. We define the {\em values} of the terms and the {\em truth values} of the formulas as (where for ${\circ}$ stands for any $n$-ary connective in $\lang{L}$):
\begin{center}
\begin{tabular}{rcll}
$\semvalue{x}^\model{M}_\eval{v}$ & $=$ & $\eval{v}(x) ,$
\\[0.5ex]
$\semvalue{F(\vectn{t})}^\model{M}_\eval{v}$ & $=$ & $F_\struct{M}(\semvalue{t_1}^\model{M}_\eval{v}\!,\,\dots
,\,\semvalue{t_n}^\model{M}_\eval{v}),$
\\[0.5ex]
$\semvalue{P(\vectn{t})}^\model{M}_\eval{v}$ & $=$ &
$P_\struct{M}(\semvalue{t_1}^\model{M}_\eval{v}\!,\,\dots ,\,\semvalue{t_n}^\model{M}_\eval{v}),$ &
\\[0.5ex]
$\semvalue{{\circ}(\vectn{\f})}^\model{M}_\eval{v}$ & $=$ & ${\circ}^\alg{A}(\semvalue{\f_1}^\model{M}_\eval{v}\!,\,\dots ,\,\semvalue{\f_n}^\model{M}_\eval{v}),$ &
\\[0.5ex]
$ \semvalue{\All{x}\f}^\model{M}_\eval{v}$ & $=$ & $\inf_{\leq_\alg{A}}
\{\semvalue{\f}^{\model{M}}_{\eval{v}[x{\to}m]} \mid m\in M\},$
\\[0.5ex]
$ \semvalue{\Exi{x}\f}^\model{M}_\eval{v}$ & $=$ & $\sup_{\leq_\alg{A}}
\{\semvalue{\f}^{\model{M}}_{\eval{v}[x{\to}m]} \mid m\in M\}.$
\end{tabular}
\end{center}

If the infimum or supremum does not exist, the corresponding value is
undefined. We say that $\model{M}$ is a {\em safe} if $\semvalue{\f}^{\model{M}}_{\eval{v}}$ is defined for each\/ $\pl$-formula $\f$ and each\/ $\model{M}$-evaluation $\eval{v}$. 

A \emph{model} in this setting would have to refer to a safe structure (in the sense that it would be a model of the given predicate graded logic under consideration), but since the construction of the Fra\"iss\'e limit that we will provide below does not necessarily have to preserve safety, our main concern here are simply structures. However, if the algebra of the model is finite, our construction will certainly preserve safety and we can talk about models again.

An important caveat is necessary at this point:  from now on, we will have a fixed \UL-chain $\alg{A}$ for the purposes of the construction in this paper. Therefore all the graded structures in this paper are assumed to be valued on $\alg{A}$.

The main reason for the above restriction is that allowing the algebra to vary arbitrarily gives us a main theorem that simply follows as a corollary of a many-sorted  Fra\"iss\'e theorem. This is because then we can read graded structures as two-sorted classical structures (see~\cite{Cintula-EGGMN:DistinguishedSemantics,de2}) and frame everything in terms of the standard Fra\"iss\'e results. Such a result would not have the same interest from the point of view of graded model theory, where one usually wants to fix a particular intended algebra of truth-values. 

Let us now recall the notion of  substructure (see e.g.~\cite{de2}).

\begin{Def}
$\tuple{\alg{A},\struct{M}}$ is a \emph{substructure} of $\tuple{\alg{A},\struct{N}}$ if the following conditions are satisfied:
\begin{enumerate}
\item $M\subseteq N$.
\item For each $n$-ary functional symbol $F\in \mathbf{F}$, and elements $\vectn{d}\in M$, $$F_{\struct{M}}(\vectn{d})=F_{\struct{N}}(\vectn{d}).$$

\item For every quantifier-free formula $\f(\vectn{x})$, and $\vectn{d} \in M$, $$\semvalue{\f(\vectn{d})}_\struct{M}^\alg{A}= \semvalue{\f(d_1,\ldots,d_n)}_\struct{N}^\alg{A}.$$
\end{enumerate}

$\tuple{\alg{A},\struct{M}}$ is \emph{generated} if $M$ has a set of generators in the obvious sense. Moreover, if such set of generators is finite, $\tuple{\alg{A},\struct{M}}$ is said to be \emph{finitely generated}.
\end{Def}

A structure $\tuple{\alg{A}, \struct{M}}$ will be said to be \emph{countable} if $M$ is countable.

\begin{Def} A pair of maps  $\tuple{g, f}$ is an \emph{embedding} from $\tuple{\alg{A},\struct{M}}$ into $\tuple{\alg{A},\struct{N}}$ if the following conditions are satisfied:
\begin{enumerate}
\item $f\colon M \longrightarrow N$ is one-to-one.
\item For each $n$-ary functional symbol $F\in \mathbf{F}$, and elements $\vectn{d}\in M,$ $$f(F_{\struct{M}}(\vectn{d}))=F_{\struct{N}}(f(d_1),\ldots,f(d_n)).$$

\item For every quantifier-free formula $\f(\vectn{x})$, and $\vectn{d} \in M$, $$\semvalue{\f(\vectn{d})}_\struct{M}^\alg{A}= \semvalue{\f(f(d_1),\ldots,f(d_n))}_\struct{N}^\alg{A}.$$

\item g is the identity map on $\alg{A}$.
\end{enumerate}
\end{Def}

In what follows, by an \emph{isomorphism} we will simply mean an onto embedding between two structures. 

A sequence $\{\tuple{\alg{ A}, {\bf M}_i} \mid i < \gamma\}$ of structures is a called a \emph{chain} when for all $i<j<\gamma$ we have that $\tuple{\alg{ A} , {\bf M}_i}$ is a substructure of  $\tuple{\alg{ A}, {\bf M}_j}$. The \emph{union} of the chain $\{\tuple{\alg{ A}, {\bf M}_i} \mid i < \gamma\}$ is the structure $\tuple{\alg{ A}, \bigcup_{i < \gamma} {\bf M}_i}$ where {\bf M} is defined by taking as its domain $\bigcup _{i<\gamma}{ M}_i$, interpreting the constants and functionals of the language as they were interpreted in each ${\bf M}_i$ and similarly with the relational symbols of the language. Observe as well that ${\bf M}$ is well-defined given that $\{\tuple{\alg{ A}, {\bf M}_i} \mid i < \gamma\}$ is a chain.

\section{Some classes of graded structures}\label{2}
Let us introduce some useful examples of graded structures. By cardinality reasons, it is necessary to fix a countable UL-chain $\alg{A}$ for the structures; otherwise, with an uncountable algebra, the number of possible finite structures even on a finite language grows beyond countable.

Let $ \mathscr{K}_0$ be the class of all finite $\alg{A}$-structures $\tuple{\alg{A}, \struct{M}}$ where there is only one binary relation $<$ and for $a, b, c \in M$ (``pre-orders''):

\begin{itemize}
\item[(0.1)] $\semvalue{a < a}_{\struct{M}}^{\alg{A}}  \geq \1^\alg{A}$

\item[(0.2)] $\semvalue{(a<b \wedge b<c) \rightarrow a<c }_{\struct{M}}^{\alg{A}}  \geq \1^\alg{A}$
\end{itemize}

If we let \alg{A} be finite, we could also describe this class as the collection of all finite $\alg{A}$-structures $\tuple{\alg{A}, \struct{M}}$ where
\begin{itemize}
\item[(0.1)$^{\prime}$] $\semvalue{\All{x} (x < x)}_{\struct{M}}^{\alg{A}}  \geq \1^\alg{A}$

\item[(0.2)$^{\prime}$] $\semvalue{\All{x}\All{y}\All{z} ((x<y \wedge y<z) \rightarrow x<z)}_{\struct{M}}^{\alg{A}} \geq \1^\alg{A}$
\end{itemize}

Let $ \mathscr{K}_1$ define the class of all finite $\alg{A}$-structures $\tuple{\alg{A}, \struct{M}}$ where there is only one binary relation $<$ and for $a, b, c \in M$ (``\alg{A}-weighted graphs''):

\begin{itemize}
\item[(1.1)] $\semvalue{a < a}_{\struct{M}}^{\alg{A}} < \1^\alg{A}$

\item[(1.2)] $\semvalue{a<b \rightarrow b<a }_{\struct{M}}^{\alg{A}} \geq \1^\alg{A}$
\end{itemize}

Now, if we let $\alg{A}$ be finite and we expand the language $\pl$ with a collection of constants for each element of $\alg{A}$ to a language  $\pl^\alg{A}$, we can define this class  as the collection of all finite $\alg{A}$-structures $\tuple{\alg{A}, \struct{M}}$ where

\begin{itemize}
\item[(1.1)$^{\prime}$] $ \semvalue{ \All{x} (x < x)  \rightarrow d_a}_{\struct{M}}^{\alg{A}} \geq \1^\alg{A}  $, where $d_a$ is the immediate predecessor of $\1^\alg{A}$ in the linear order of \alg{A}.

\item[(1.2)$^{\prime}$] $ \semvalue{\All{x} \All{y}(x<y \rightarrow y<x) }_{\struct{M}}^{\alg{A}}   \geq \1^\alg{A}$
\end{itemize}

Let $ \mathscr{K}_2$ be the class of all finite $\alg{A}$-structures $\tuple{\alg{A}, \struct{M}}$ where there is only one binary relation $<$ and for $a, b, c \in M$ (``total-orders''):

\begin{itemize}
\item[(2.1)] $\semvalue{a < a}_{\struct{M}}^{\alg{A}}  \geq \1^\alg{A} $

\item[(2.2)] $\semvalue{(a<b \wedge b<c) \rightarrow a<c}_{\struct{M}}^{\alg{A}}  \geq \1^\alg{A}$

\item[(2.2)] $\semvalue{ a<b \vee b<a}_{\struct{M}}^{\alg{A}}  \geq \1^\alg{A}$
\end{itemize}

If we let \alg{A} be finite, we could also describe this class as the collection of all finite $\alg{A}$-structures $\tuple{\alg{A},\struct{M}}$ where
\begin{itemize}
\item[(2.1)$^{\prime}$] $\semvalue{\All{x}(x < x)}_{\struct{M}}^{\alg{A}}  \geq \1^\alg{A} $

\item[(2.2)$^{\prime}$] $\semvalue{\All{x} \All{y} \All{z} ((x<y \wedge y<z) \rightarrow x<z)}_{\struct{M}}^{\alg{A}}  \geq \1^\alg{A}$

\item[(2.2)$^{\prime}$] $\semvalue{ \All{x} \All{y} (x<y \vee y<x)}_{\struct{M}}^{\alg{A}}  \geq \1^\alg{A}$
\end{itemize}

Let $\mathscr{K}_3$ be the class of all finite $\alg{A}$-structures $\tuple{\alg{A}, \struct{M}}$ where there is only one binary relation $<$ and for $a, b, c \in M$:

\begin{itemize}
\item[(3.1)] $\semvalue{a < a}_{\struct{M}}^{\alg{A}}  \geq \1^\alg{A}  $

\item[(3.2)] $\semvalue{a < b }_{\struct{M}}^{\alg{A}}  \geq \1^\alg{A} $ and $\semvalue{b < c }_{\struct{M}}^{\alg{A}}  \geq \1^\alg{A}  $ only if  $\semvalue{a < c }_{\struct{M}}^{\alg{A}}  \geq \1^\alg{A}   $

\item[(3.3)]   $\semvalue{ a < b }_{\struct{M}}^{\alg{A}}  \geq \1^\alg{A} $ and $|| b < a ||_{\struct{M}}^{\alg{A}} \geq \1^\alg{A} $ only if $a = b$
\end{itemize}

Next we need to introduce three properties of classes of structures that will play a fundamental role in the main theorem of the paper.

\begin{Def}
A class $\mathscr{K}$ of  relational $\alg{A}$-structures is said to have the \emph{hereditary property (HP)} if  $\mathscr{K}$ is closed under taking substructures. 
\end{Def}

\begin{Def}
A class $\mathscr{K}$ of structures is said to have the \emph{joint embedding property (JEP)} if any two elements of $\mathscr{K}$ have a common extension in $\mathscr{K}$.
\end{Def}

\begin{Def}
 We say that  $\mathscr{K}$ has the { \em amalgamation property (AP)} if given a v-formation $\tuple{\alg{A},\struct{M}_0} \subseteq \tuple{\alg{A},\struct{M}_1}$, $\tuple{\alg{A},\struct{M}_0} \subseteq \tuple{\alg{A}, \struct{M}_2}$ of structures in $\mathscr{K}$, there is $\tuple{\alg{A}, \struct{M}_3} \in \mathscr{K}$ and embeddings $\tuple{Id, f_1}\colon \tuple{\alg{A},\struct{M}_1} \longrightarrow \tuple{\alg{A}, \struct{M}_3}$ and $\tuple{Id,f_2}\colon \tuple{\alg{A},\struct{M}_2} \longrightarrow \tuple{\alg{A},\struct{M}_3}$ which coincide on their images for the elements of $\tuple{\alg{A}, \struct{M}_0}$.
\end{Def}

For any class of graded structures $\mathscr{K}$, by $\mathscr{K}^{\cong}$ we will mean the class of isomorphism types of $\mathscr{K}$, that is, for every element $\tuple{\alg{A},\struct{M}}$ of $\mathscr{K}$, $\mathscr{K}^{\cong}$ will contain exactly one structure isomorphic to $\tuple{\alg{A},\struct{M}}$. 

\section{Fra\"iss\'e's theorem for classes of graded structures}\label{3}
In this section we are ready to establish the main result of the paper regarding the construction of a structure from its finitely generated parts. We begin with an auxiliary definition.

\begin{Def}
The \emph{age} of a structure $\tuple{\alg{A}, \struct{M}}$, in symbols $\mbox{Age}(\alg{A}, \struct{M})$, is the collection of all finitely generated substructures of  $\tuple{\alg{A}, \struct{M}}$ and their isomorphic copies.
\end{Def}

In practice, when speaking about the age of $\tuple{\alg{A}, \struct{M}}$ we simply mean the collection of isomorphism types of its age (otherwise, the class grows too big and unmanageable).

\begin{Thm}\label{thm1} Let $\mathscr{K}$ be a countable class of finitely generated $\alg{A}$-structures for the same language $\pl$.  Then, $ \mathscr{K} =  \mbox{Age}(\alg{A}, \struct{N})$ for some $\tuple{\alg{A}, \struct{N}}$ iff $\mathscr{K}$ satisfies HP and JEP.  Furthermore, if $ \mathscr{K}$ is a class of arbitrary cardinality which  satisfies HP, JEP and is closed under unions of chains, then $\mathscr{K} =  \mbox{Age}(\alg{A}, \struct{N})$ for some structure $\tuple{\alg{A}, \struct{N}}$.\end{Thm}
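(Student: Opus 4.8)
The plan is to treat the biconditional and the final clause by the standard age/chain machinery, using crucially that the chain $\alg{A}$ is fixed once and for all. Since every embedding is a pair $\tuple{Id,f}$ whose first coordinate is the identity on $\alg{A}$, the algebraic coordinate plays no active role: an embedding is determined by the injection $f$ of domains together with the requirement that quantifier-free truth values be preserved, so all the combinatorics reduce to the classical single-sorted case read off the second coordinates, with the value clause of the substructure/embedding definitions standing in for the usual relation-preservation conditions.

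For the direction from right to left I would first check the easy implication that $\mathscr{K}=\mbox{Age}(\alg{A},\struct{N})$ forces HP and JEP. Hereditariness: if $\tuple{\alg{A},\struct{M}}$ lies in the age and $\tuple{\alg{A},\struct{M}'}$ is a finitely generated substructure of it, then composing the embedding of $\struct{M}$ into $\struct{N}$ with the inclusion of $\struct{M}'$ presents $\struct{M}'$ as a finitely generated substructure of $\struct{N}$, hence as a member of the age. Joint embedding: two members of the age are finitely generated substructures of $\struct{N}$, and the substructure of $\struct{N}$ generated by the union of their two finite generating sets is again finitely generated, is a member of the age, and receives both via identity-on-$\alg{A}$ embeddings.

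For the converse in the countable case I would enumerate $\mathscr{K}^{\cong}=\{\tuple{\alg{A},\struct{M}_i}\mid i<\omega\}$ and build a chain inside $\mathscr{K}$ by setting $\struct{N}_0=\struct{M}_0$ and taking $\struct{N}_{i+1}$ to be a joint embedding of $\struct{N}_i$ and $\struct{M}_{i+1}$ (reindexing the JEP-amalgam so that the chain is genuinely increasing). Because every member of $\mathscr{K}$ is finitely generated, each $\struct{N}_i$ is, so JEP keeps applying; let $\tuple{\alg{A},\struct{N}}$ be the union of the chain, well defined by the remark closing \S\ref{1}. Then $\mathscr{K}\subseteq\mbox{Age}(\alg{A},\struct{N})$ since $\struct{M}_i$ embeds into $\struct{N}_i\subseteq\struct{N}$, and $\mbox{Age}(\alg{A},\struct{N})\subseteq\mathscr{K}$ since any finitely generated substructure of $\struct{N}$ has its finitely many generators inside a single $\struct{N}_k$, whence, by locality of substructure generation, it is a finitely generated substructure of $\struct{N}_k\in\mathscr{K}$ and so lies in $\mathscr{K}$ by HP.

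For the final clause I would run the same construction along a well-ordering $\langle\tuple{\alg{A},\struct{M}_\alpha}\mid\alpha<\kappa\rangle$ of $\mathscr{K}^{\cong}$, amalgamating the next type at successor steps and setting $\struct{N}_\lambda=\bigcup_{\alpha<\lambda}\struct{N}_\alpha$ at limits. The containment $\mbox{Age}(\alg{A},\struct{N})\subseteq\mathscr{K}$ still goes through because a finite generating set is realized below some stage $\beta<\kappa$ (a finite max of ordinals below $\kappa$ is below $\kappa$, irrespective of cofinality), and the reverse containment and the algebraic bookkeeping are unchanged. The one genuinely new point, and the place I expect the real work, is that to apply JEP immediately after a limit stage one needs $\struct{N}_\lambda$ itself to belong to $\mathscr{K}$; this is precisely what closure under unions of chains supplies, and it is the only hypothesis beyond HP and JEP that the transfinite recursion consumes. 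I would therefore flag the verification that each limit union is again a member of $\mathscr{K}$—and hence a legitimate argument for the next JEP step—as the main obstacle relative to the countable case.
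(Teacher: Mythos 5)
Your proposal is correct and follows essentially the same route as the paper: the easy direction via the substructure generated by the union of the two finite generating sets, the converse via an enumeration of $\mathscr{K}$ and an increasing chain built by iterated JEP whose union has age exactly $\mathscr{K}$ (using HP and the fact that finitely many generators land in a single stage), and the transfinite case handled by taking unions at limit ordinals, where closure under unions of chains keeps the construction inside $\mathscr{K}$. Your added care about reindexing the JEP-extension, the locality of generators, and the cofinality-independence of the limit argument only makes explicit what the paper leaves implicit.
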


\begin{proof} First, suppose that $ \mathscr{K} =  \mbox{Age}(\alg{A}, \struct{N})$ for some $\tuple{\alg{A}, \struct{N}}$. Then, $ \mathscr{K}$ must satisfy HP because any finitely generated substructure of a structure in the age of $\tuple{\alg{A}, \struct{N}}$ must remain in the age. Now,  given $\tuple{\alg{A}, \struct{M}_1}, \
\tuple{\alg{A}, \struct{M}_2} \in \mbox{Age}(\alg{A}, \struct{N})$, we may consider the structure generated by the finite union of the generators of $\tuple{\alg{A}, \struct{M}_1}$ and $\tuple{\alg{A}, \struct{M}_2}$, which is obviously also in $\mbox{Age}(\alg{A}, \struct{N})$. Hence, $\mbox{Age}(\alg{A}, \struct{N})$ has JEP.

Conversely, assume that $\mathscr{K}$ satisfies HP and JEP. Since $\mathscr{K}$ is countable we can take an enumeration of its members: $\tuple{\alg{A}_0, \struct{M}_0}, \tuple{\alg{A}_1, \struct{M}_1}, \dots$. We define inductively a chain of elements of $ \mathscr{K}$, $\tuple{\alg{A}, \struct{N}_0}, \tuple{\alg{A}, \struct{N}_1}, \dots$ as follows: 
$\tuple{\alg{A}, \struct{N}_0} = \tuple{\alg{A}, \struct{M}_0}$, and given $\tuple{\alg{A}, \struct{N}_i} $ we let $\tuple{\alg{A}, \struct{N}_{i+1}} $ be obtained by JEP with $\tuple{\alg{A}, \struct{M}_{i+1}} $. Finally, we take the union of the chain we have constructed and check that its age is exactly $\mathscr{K}$; indeed, every member of $\mathscr{K}$ is certainly in the age of this union and, conversely, if some structure is in the age, it must be in $\mathscr{K}$ by construction and thanks to HP. 

Now, if $\mathscr{K}$ is an arbitrary class satisfying HP and JEP, and closed under unions of chains, we can repeat the construction in the previous paragraph and deal with the case of limit ordinals by taking unions.\qed
\end{proof}

 \begin{Cor} Let  $\mathscr{K}$ be a class of structures of same language $\pl$. Then $\mathscr{K} = \{ \tuple{\alg{A}, \struct{M}} \mid \ \mbox{Age}(\alg{A}, \struct{M}) \subseteq  \mbox{Age}(\alg{A}, \struct{N})\}$ for some $\tuple{\alg{A}, \struct{N}}$ if $\mathscr{K}$ satisfies HP and JEP and is closed under unions of chains. \end{Cor}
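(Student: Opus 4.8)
The plan is to reduce the statement to Theorem~\ref{thm1} by isolating the finitely generated part of $\mathscr{K}$. First I would set $\mathscr{K}_{\mathrm{fg}}$ to be the subclass consisting of one representative of each isomorphism type of finitely generated member of $\mathscr{K}$, and check that it inherits HP and JEP. Hereditariness is immediate, since a finitely generated substructure of a finitely generated member of $\mathscr{K}$ is again finitely generated and lies in $\mathscr{K}$ by HP. For JEP, given two structures in $\mathscr{K}_{\mathrm{fg}}$, I would first take a common extension in $\mathscr{K}$ (available by JEP of $\mathscr{K}$) and then pass to the substructure generated by the images of the two finite generating sets; this is finitely generated and, being a substructure of a member of $\mathscr{K}$, belongs to $\mathscr{K}_{\mathrm{fg}}$ by HP, while still receiving both original structures.

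Second, I would produce a structure $\tuple{\alg{A},\struct{N}}$ with $\mbox{Age}(\alg{A},\struct{N}) = \mathscr{K}_{\mathrm{fg}}$ by rerunning the chain construction from the proof of Theorem~\ref{thm1}: enumerate $\mathscr{K}_{\mathrm{fg}}$ by a sufficiently long ordinal, build an increasing chain by JEP at successor stages, and take unions at limit stages. The point is that every link of this chain lies in $\mathscr{K}$, so the limit stages are legitimate precisely because $\mathscr{K}$ itself is closed under unions of chains; hence $\tuple{\alg{A},\struct{N}} \in \mathscr{K}$ as well, and exactly as in Theorem~\ref{thm1} its age is $\mathscr{K}_{\mathrm{fg}}$, since every member embeds into $\struct{N}$ while every finitely generated substructure of $\struct{N}$ sits inside some link and so lies in $\mathscr{K}_{\mathrm{fg}}$ by HP.

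Third, I would prove the two inclusions of $\mathscr{K} = \{\tuple{\alg{A},\struct{M}} \mid \mbox{Age}(\alg{A},\struct{M}) \subseteq \mbox{Age}(\alg{A},\struct{N})\}$. The inclusion from left to right is routine: if $\tuple{\alg{A},\struct{M}} \in \mathscr{K}$, then each of its finitely generated substructures lies in $\mathscr{K}_{\mathrm{fg}} = \mbox{Age}(\alg{A},\struct{N})$ by HP, whence $\mbox{Age}(\alg{A},\struct{M}) \subseteq \mbox{Age}(\alg{A},\struct{N})$. The converse is the substantial part, and I expect it to be the main obstacle. For it I would argue by transfinite induction on the cardinality of $M$ that $\mbox{Age}(\alg{A},\struct{M}) \subseteq \mbox{Age}(\alg{A},\struct{N})$ forces $\tuple{\alg{A},\struct{M}} \in \mathscr{K}$.

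When $\struct{M}$ is finitely generated the base case is immediate, since $\tuple{\alg{A},\struct{M}}$ lies in its own age and hence in $\mbox{Age}(\alg{A},\struct{N}) = \mathscr{K}_{\mathrm{fg}} \subseteq \mathscr{K}$. When $\struct{M}$ is infinite, I would fix an enumeration of its domain and form the continuous increasing chain of substructures generated by the initial segments; each link has strictly smaller cardinality and, being a substructure of $\struct{M}$, has age contained in $\mbox{Age}(\alg{A},\struct{N})$, so the induction hypothesis places every link in $\mathscr{K}$, and $\tuple{\alg{A},\struct{M}}$, being the union of this chain, lies in $\mathscr{K}$ by closure under unions of chains. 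The delicate points are the continuity of this filtration at limit stages, which holds because the generated-substructure operation commutes with unions of chains as all function and relation symbols are finitary, and the cardinality bookkeeping ensuring each link is genuinely smaller so that the induction is well-founded; these are exactly the places where closure under unions of chains and the finitary nature of the language carry the argument.
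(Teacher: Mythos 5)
Your proposal is correct and matches the paper's own proof essentially step for step: the same auxiliary class of finitely generated substructures of members of $\mathscr{K}$ (shown to inherit HP and JEP by generating inside a common extension), the same easy left-to-right inclusion, and the same transfinite induction on cardinality for the converse, with closure under unions of chains discharging the limit case. The only divergence is that where the paper simply asserts $\mathscr{K}' = \mbox{Age}(\alg{A}, \struct{N})$ by appeal to Theorem~\ref{thm1}, you rerun the chain construction explicitly, keeping every link inside $\mathscr{K}$ so that its closure under unions legitimizes the limit stages---a slightly more careful elaboration of the same argument, since $\mathscr{K}'$ itself need be neither countable nor closed under unions of chains.
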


\begin{proof}
Suppose  that $\mathscr{K}$ satisfies HP and JEP and, moreover, it is closed under unions of chains. The collection of finitely generated substructures of structures from $\mathscr{K}$ (denoted as  $\mathscr{K} ^{\prime}$)  satisfies HP rather trivially since $\mathscr{K} $ itself satisfies it. On the other hand,  $\mathscr{K} ^{\prime}$ also satisfies JEP, because given $\tuple{\alg{A}, \struct{M}_1}, \tuple{\alg{A}, \struct{M}_2} \in \mathscr{K}^{\prime}$, using JEP for $\mathscr{K} $, we obtain $\tuple{\alg{A}, \struct{M}_3} \in \mathscr{K} $ containing $\tuple{\alg{A}, \struct{M}_1}, \tuple{\alg{A}, \struct{M}_2}$ as finitely generated substructures, and then we can generate a substructure of $\tuple{\alg{A}, \struct{M}_3}$ in $\mathscr{K} ^{\prime}$ from the finite union of the generators of $\tuple{\alg{A}, \struct{M}_1}$ and $\tuple{\alg{A}, \struct{M}_2}$. Then, $\mathscr{K} ^{\prime} = \mbox{Age}(\alg{A}, \struct{N})$. Now, if $\tuple{\alg{A}, \struct{M}} \in \mathscr{K} $ and $\tuple{\alg{A}, \struct{M}^{\prime}} \subseteq \tuple{\alg{A}, \struct{M}}$ is finitely generated, then $\tuple{\alg{A}, \struct{M}^{\prime}} \in \mathscr{K} ^{\prime} = \mbox{Age}(\alg{A}, \struct{N})$.

 On the other hand, assume that $\tuple{\alg{A}, \struct{M}^{\prime}} \in  \mbox{Age}(\alg{A}, \struct{N}) = \mathscr{K} ^{\prime}$ for all finitely generated substructures of $\tuple{\alg{A}, \struct{M}}$. We need to show that, in fact, $\tuple{\alg{A}, \struct{M}} \in  \mathscr{K} $. This can be accomplished by  induction on the cardinality of the generators of the base of the structure $\tuple{\alg{A}, \struct{M}} $. 

First suppose that the set of generators of $M$ is finite. Then, trivially,  $\tuple{\alg{A}, \struct{M}}\in  \mbox{Age}(\alg{A}, \struct{N}) = \mathscr{K} ^{\prime} \subseteq \mathscr{K} $. Next, let the set of generators of $M$ be infinite, of cardinality $\lambda$ (and by inductive hypothesis assume that we have the result holding when the set of generators of $M$ has cardinality $< \lambda$).  We may suppose that $\tuple{\alg{A}, \struct{M}^{\prime}} \in  \mbox{Age}(\alg{A}, \struct{N}) = \mathscr{K} ^{\prime}$ for all finitely generated substructures of $\tuple{\alg{A}, \struct{M}}$ only if $\tuple{\alg{A}, \struct{M}} \in   \mathscr{K} $, whenever $M$ is generated by $< \lambda$ elements. We can, however, write $\tuple{\alg{A}, \struct{M}}$ as the union of a chain of substructures $\tuple{\alg{A}, \struct{M}^{\prime}}$ such that the set of generators of $M^{\prime}$ has cardinality $< \lambda$.\qed
\end{proof}

We can apply the characterization to the first of our examples: the class of finite pre-orders.

\begin{Pro}  $ \mathscr{K}_0^{\cong}$ is the age of some structure.\end{Pro}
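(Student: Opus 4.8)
The plan is to apply Theorem \ref{thm1}: since $\mathscr{K}_0^{\cong}$ is a class of finite — hence finitely generated, as the language contains only the relation $<$ and so every element is its own generator — $\alg{A}$-structures, it will be the age of some $\tuple{\alg{A}, \struct{N}}$ as soon as we check that it is countable and that it satisfies HP and JEP. Countability is immediate from the standing assumption that $\alg{A}$ is countable: a structure on a finite domain of size $n$ is determined up to isomorphism by the finitely many values $\semvalue{a < b}_{\struct{M}}^{\alg{A}} \in A$ for $a,b$ in the domain, so there are only countably many isomorphism types of each finite size, and countably many in total.

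For HP I would first record a useful reformulation of the defining conditions. Since in a \UL-chain $\semvalue{\varphi \to \psi}_{\struct{M}}^{\alg{A}} \geq \1^\alg{A}$ holds exactly when $\semvalue{\varphi}_{\struct{M}}^{\alg{A}} \leq \semvalue{\psi}_{\struct{M}}^{\alg{A}}$ (apply (res) with $b = \1^\alg{A}$, using that $\1^\alg{A}$ is the monoidal unit), condition (0.2) is equivalent to the lattice inequality $\semvalue{a<b}_{\struct{M}}^{\alg{A}} \wedge \semvalue{b<c}_{\struct{M}}^{\alg{A}} \leq \semvalue{a<c}_{\struct{M}}^{\alg{A}}$ for all $a,b,c \in M$. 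Both (0.1) and this reformulation of (0.2) are quantified universally over domain elements and phrased purely through values of quantifier-free formulas. Hence if $\tuple{\alg{A}, \struct{M}}$ is a substructure of some $\tuple{\alg{A}, \struct{N}} \in \mathscr{K}_0$, then for $a,b,c \in M \subseteq N$ the substructure clause gives $\semvalue{a<a}_{\struct{M}}^{\alg{A}} = \semvalue{a<a}_{\struct{N}}^{\alg{A}} \geq \1^\alg{A}$ and likewise transfers the transitivity inequality, so $\tuple{\alg{A}, \struct{M}} \in \mathscr{K}_0$; thus HP holds.

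The main work is JEP. Given $\tuple{\alg{A}, \struct{M}_1}, \tuple{\alg{A}, \struct{M}_2} \in \mathscr{K}_0$ I would form the disjoint union $\struct{M}_3$ with domain $M_1 \sqcup M_2$, keeping $\semvalue{a<b}_{\struct{M}_3}^{\alg{A}}$ equal to its value in the relevant $\struct{M}_i$ when $a,b$ lie in the same part, and setting $\semvalue{a<b}_{\struct{M}_3}^{\alg{A}} = \bot^\alg{A}$ for every cross pair. Because within-part values are untouched, the inclusions yield embeddings $\tuple{Id, \iota_1}$ and $\tuple{Id, \iota_2}$, and $\struct{M}_3$ is finite. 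It then remains to verify $\struct{M}_3 \in \mathscr{K}_0$: reflexivity is clear since a diagonal pair $x<x$ is never a cross pair and keeps its original value $\geq \1^\alg{A}$; for transitivity, using the min-reformulation, whenever the conclusion $a<c$ is a cross pair its value is $\bot^\alg{A}$, but every intermediate $b$ then makes at least one of $a<b$, $b<c$ a cross pair, forcing $\semvalue{a<b}_{\struct{M}_3}^{\alg{A}} \wedge \semvalue{b<c}_{\struct{M}_3}^{\alg{A}} = \bot^\alg{A} \leq \semvalue{a<c}_{\struct{M}_3}^{\alg{A}}$; and when $a<c$ lies within a single part, any cross pair in the antecedent again drives the meet down to $\bot^\alg{A}$, while the all-within-one-part case is exactly the transitivity already available in $\struct{M}_1$ or $\struct{M}_2$.

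This establishes JEP and completes the three hypotheses of Theorem \ref{thm1}. The expected obstacle is precisely keeping the glued structure a pre-order, i.e. the transitivity bookkeeping in $\struct{M}_3$; pushing all cross values to the lattice bottom $\bot^\alg{A}$ handles it cleanly, since $\bot^\alg{A}$ is absorbing for the lattice meet and so neutralizes every potential violation involving a cross pair.
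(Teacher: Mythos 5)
Your proof is correct and follows essentially the same route as the paper's: countability of $\mathscr{K}_0^{\cong}$, HP from preservation of quantifier-free values under substructures, JEP via a disjoint union in which every cross pair receives a trivially small value, and then an appeal to Theorem~\ref{thm1}. The only divergence is that the paper evaluates cross pairs at $\0^\alg{A}$ where you use $\bot^\alg{A}$; your choice is in fact the more careful one, since in an arbitrary \UL-chain $\0^\alg{A}$ need not be the least element, and the transitivity check for a within-part conclusion $a<c$ with a cross middle point $b$ genuinely requires the cross value to lie below all values that $<$ takes inside $\struct{M}_1$ and $\struct{M}_2$, which $\bot^\alg{A}$ guarantees unconditionally.
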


\begin{proof}
 $ \mathscr{K}_0^{\cong}$ is countable and it  has HP because the property of being a preorder will be preserved under taking substructures. On the other hand, it is not difficult to show that it has JEP. Take  $\tuple{\alg{A}, \struct{M}_1}, \tuple{\alg{A}, \struct{M}_2} \in \mathscr{K}_0^{\cong}$ (and we may suppose that their bases $M_1$ and $M_2$ are disjoint). Build the $\alg{A}$-structure $\tuple{\alg{A}, \struct{M}_3}$ where the base of $\struct{M}_3$ is the union $M_1 \cup M_2$ and the relations take values according to $\tuple{\alg{A}, \struct{M}_1}$ and $ \tuple{\alg{A}, \struct{M}_2}$ except when one of the arguments comes from $M_1$ and the other from $M_2$, in which case we evaluate the relation for that pair as $\0^\alg{A}$. Now, $\1^\alg{A}$ is a lower bound of $\{ \semvalue{a < a }_{\struct{M}_3}^{\alg{A}} \mid a \in M_3\}$ in $\alg{A}$ because $\1^\alg{A}$ is a lower bound  of $\{ \semvalue{ a < a }_{\struct{M}_i}^{\alg{A}} \mid a \in M_i\}$ in $\alg{A}$ ($i = 1, 2$). Similarly, $ \1^\alg{A}$ is a lower bound of 
 $$
\{ \semvalue{( a<b \wedge b<c ) \rightarrow a<c }_{\struct{M}_3}^{\alg{A}} \mid a, b, c \in M_3\}
$$
 in $\alg{A}$. Hence, by Theorem~\ref{thm1}, this class is the age of some structure.\qed
\end{proof}

The property described in the following definition is sometimes also called \emph{ultrahomogeneity} \cite{h}, but we can use the original name since there will be no other notion of homogeneity in this paper.
\begin{Def}
A relational structure is called \emph{homogeneous} if every isomorphism between two finitely generated substructures extends to an automorphism of the structure.
\end{Def}

The structure constructed in the next theorem will be called the \emph{Fra\"iss\'e limit} of the class $\mathscr{K}$ in question.  By a \emph{Fra\"iss\'e class} we mean any class of graded structures satisfying the properties described in the theorem. The improvement over Theorem~\ref{thm1} is that this time we will make the class in question the age of a unique structure which will be homogeneous. This is accomplished by demanding that the class satisfies AP.

\begin{Thm}\label{thm2}\emph{(Fra\"iss\'e's theorem)} Let $\mathscr{K}$ be a countable  set of finitely generated structures  of the same language $\pl$. If $\mathscr{K}$ has HP, JEP, and AP, then there is a unique countable homogeneous structure $\tuple{\alg{A}, \struct{M}}$ such that $\mathscr{K} = \mbox{Age}(\alg{A}, \struct{M})$ (up to isomorphism).  Moreover, if a structure is a homogeneous, then its age has AP. \end{Thm}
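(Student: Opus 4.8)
The plan is to imitate the classical Fra\"iss\'e construction, building the desired structure as the union of a chain of finitely generated members of $\mathscr{K}$, with the fixed algebra $\alg{A}$ carried along at every step. The key is to arrange an \emph{extension property} for the union $\tuple{\alg{A},\struct{M}}$: whenever $\tuple{\alg{A},\struct{P}}$ is a finitely generated substructure of $\tuple{\alg{A},\struct{M}}$ and $\tuple{\alg{A},\struct{P}}\subseteq\tuple{\alg{A},\struct{Q}}$ with $\tuple{\alg{A},\struct{Q}}\in\mathscr{K}$, there is an embedding $\tuple{Id,f}\colon\tuple{\alg{A},\struct{Q}}\longrightarrow\tuple{\alg{A},\struct{M}}$ that is the identity on the image of $\tuple{\alg{A},\struct{P}}$. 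Since embeddings are by definition required to preserve the $\alg{A}$-values of all quantifier-free formulas, every amalgam and every partial map produced below will automatically respect the graded data; the graded setting therefore introduces no essentially new preservation problem, and once the extension property is in hand, homogeneity, uniqueness, and the converse all follow by back-and-forth.

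First I would build the chain $\tuple{\alg{A},\struct{N}_0}\subseteq\tuple{\alg{A},\struct{N}_1}\subseteq\cdots$ of members of $\mathscr{K}$. Because $\mathscr{K}$ is countable, so too (up to isomorphism) are its finitely generated structures and the embeddings among them, and a countable union has only countably many finitely generated substructures; hence all the requirements to be met can be arranged in a single $\omega$-enumeration and dovetailed. Two families of tasks are interleaved: invoking JEP, I guarantee that every member of $\mathscr{K}$ embeds into some $\tuple{\alg{A},\struct{N}_i}$, which together with HP (each $\struct{N}_i\in\mathscr{K}$) forces $\mbox{Age}(\alg{A},\struct{M})=\mathscr{K}$; invoking AP, for each finitely generated substructure $\tuple{\alg{A},\struct{P}}$ of the current $\tuple{\alg{A},\struct{N}_i}$ and each extension $\tuple{\alg{A},\struct{P}}\subseteq\tuple{\alg{A},\struct{Q}}$ in $\mathscr{K}$ appearing in the enumeration, I amalgamate $\struct{Q}$ with $\struct{N}_i$ over $\struct{P}$ to form $\struct{N}_{i+1}$, thereby realizing that extension inside the limit. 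The union $\tuple{\alg{A},\struct{M}}=\tuple{\alg{A},\bigcup_i\struct{N}_i}$ is then a countable structure of age $\mathscr{K}$ with the extension property. I expect the genuine labor here to be precisely this bookkeeping: scheduling the JEP-tasks and the (ever-growing supply of) AP-extension-tasks into one enumeration while keeping every $\struct{N}_i$ finitely generated and inside $\mathscr{K}$.

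Next I would establish homogeneity. Given an isomorphism $\tuple{Id,h}$ between two finitely generated substructures of $\tuple{\alg{A},\struct{M}}$, I extend it to an automorphism by back-and-forth: fixing an enumeration of the generators of $\struct{M}$, at each stage I adjoin one new generator to the domain or to the range and use the extension property to extend the current finite partial isomorphism, alternating sides so that in the limit every generator is covered on both sides. Each finite stage remains value-preserving, so the resulting union is an automorphism. Uniqueness follows by the same device: if $\tuple{\alg{A},\struct{M}}$ and $\tuple{\alg{A},\struct{M}'}$ are both countable, homogeneous, and of age $\mathscr{K}$, then each satisfies the extension property (homogeneity together with $\mbox{Age}=\mathscr{K}$ yields it), and running the back-and-forth between $\struct{M}$ and $\struct{M}'$ produces an isomorphism.

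Finally, for the ``moreover'' clause, suppose $\tuple{\alg{A},\struct{N}}$ is homogeneous and let $\mathscr{K}=\mbox{Age}(\alg{A},\struct{N})$. Given a v-formation $\tuple{\alg{A},\struct{M}_0}\subseteq\tuple{\alg{A},\struct{M}_1}$, $\tuple{\alg{A},\struct{M}_0}\subseteq\tuple{\alg{A},\struct{M}_2}$ in $\mathscr{K}$, I fix embeddings $\tuple{Id,e_1}\colon\tuple{\alg{A},\struct{M}_1}\longrightarrow\tuple{\alg{A},\struct{N}}$ and $\tuple{Id,e_2}\colon\tuple{\alg{A},\struct{M}_2}\longrightarrow\tuple{\alg{A},\struct{N}}$, which exist since the $\struct{M}_i$ lie in the age. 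The map carrying $e_1(d)$ to $e_2(d)$ for $d\in M_0$ is an isomorphism between the finitely generated substructures $e_1(\struct{M}_0)$ and $e_2(\struct{M}_0)$ of $\struct{N}$, so by homogeneity it extends to an automorphism $\tuple{Id,\alpha}$ of $\struct{N}$. Then $\alpha\circ e_1$ and $e_2$ agree on $M_0$, and the finitely generated substructure of $\struct{N}$ generated by the union of their images belongs to $\mathscr{K}=\mbox{Age}(\alg{A},\struct{N})$ and serves as the required amalgam $\tuple{\alg{A},\struct{M}_3}$, establishing AP.
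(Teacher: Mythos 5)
Your proposal is correct and follows essentially the same route as the paper: construct a chain of members of $\mathscr{K}$ realizing all AP-extension tasks so that the union $\tuple{\alg{A},\struct{M}}$ has the extension property, derive $\mbox{Age}(\alg{A},\struct{M})=\mathscr{K}$ via HP and JEP, obtain homogeneity and uniqueness by back-and-forth, and prove the converse by extending the isomorphism between the two embedded copies of $\struct{M}_0$ to an automorphism of the homogeneous structure and taking the finitely generated substructure spanned by the union of the images as the amalgam. The only divergence is organizational bookkeeping: you dovetail all JEP- and AP-tasks into a single $\omega$-enumeration so that every link of the chain stays finitely generated and in $\mathscr{K}$, whereas the paper processes, at each outer stage $i$, all extension triples for $\struct{M}_i$ through an auxiliary inner chain $\tuple{\alg{A},\struct{M}_{ij}}_{j<\omega}$ and takes its union as $\struct{M}_{i+1}$ (an intermediate union that need not itself be finitely generated), so your scheduling is, if anything, slightly tidier on this point.
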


\begin{proof} Let us construct inductively a chain $\tuple{\alg{A}, \struct{M}_i}_{i < \omega}$ of elements of $\mathscr{K}$  such that if $\tuple{\alg{A}, \struct{N}}, \tuple{\alg{A}, \struct{N}^{\prime}}\in \mathscr{K}$, $\tuple{\alg{A}, \struct{N}} \subseteq \tuple{\alg{A}, \struct{N}^{\prime}}$ and there is an embedding $$\tuple{Id, f} \colon \tuple{\alg{A}, \struct{N}} \longrightarrow \tuple{\alg{A}, \struct{M}_i}$$ for some $i$, then there exists an embedding $$\tuple{Id, f^{\prime}} \colon \tuple{\alg{A}, \struct{N}^{\prime}} \longrightarrow \tuple{\alg{A}, \struct{M}_j}$$ for some $j > i$ extending $\tuple{Id, f}$. Let $\tuple{\alg{A}, \struct{M}_0} \in \mathscr{K}$ be arbitrary. Now, given $\tuple{\alg{A}, \struct{M}_i}$, we can list as $$\langle \tuple{Id, f_{ij}}, \tuple{\alg{A}, \struct{N}_{ij}}, \tuple{\alg{A}, \struct{N}^{\prime}_{ij}} \rangle_{j < \omega}$$  all the triples such that $\tuple{\alg{A}, \struct{N}_{ij}} \subseteq \tuple{\alg{A}, \struct{N}^{\prime}_{ij}}$ and $$\tuple{Id, f_{ij}} \colon \tuple{\alg{A}, \struct{N}_{ij}} \longrightarrow \tuple{\alg{A}, \struct{M}_i}.$$ Now we may build an auxiliary chain $\tuple{\alg{A}, \struct{M}_{ij}}_{j < \omega}$ inductively as follows. First, put $\tuple{\alg{A}, \struct{M}_{i0}} = \tuple{\alg{A}, \struct{M}_{i}} $. Next, having defined $\tuple{\alg{A}, \struct{M}_{ij}}$, we obtain $\tuple{\alg{A}, \struct{M}_{ij+1}}$ by amalgamation with  $\tuple{\alg{A}, \struct{N}_{ij}}$ and $ \tuple{\alg{A}, \struct{N}^{\prime}_{ij}} $ (this can be done since we can take isomorphic copies of structures which will turn embeddings into substructure relations  in order to apply AP). Let $\tuple{\alg{A}, \struct{M}_{i+1}}$ be the union of $\tuple{\alg{A}, \struct{M}_{ij}}_{j < \omega}$. 

Now consider the union of $\tuple{\alg{A}, \struct{M}_i}_{i < \omega}$. The age of this structure is certainly included in $\mathscr{K}$, because any finitely generated substructure would have to be a finitely generated substructure of some member of the chain, which is in $\mathscr{K}$, and by HP we have what we desire. On the other hand, if $\tuple{\alg{A}, \struct{N}}$ is in $\mathscr{K}$, using the JEP with $\tuple{\alg{A}, \struct{M}_0}$, we can produce $\tuple{\alg{A}, \struct{N}^{\prime}} \supseteq \tuple{\alg{A}, \struct{M}_0}, \tuple{\alg{A}, \struct{N}}$. Now considering the identity embedding from $\tuple{\alg{A}, \struct{M}_0}$ into itself, we can see, by the property of the union of $\tuple{\alg{A}, \struct{M}_i}_{i < \omega}$ that we ensured by construction, that for some $i > 0$, there exists an embedding $$\tuple{Id, f} \colon\tuple{\alg{A}, \struct{N}^{\prime}} \longrightarrow \tuple{\alg{A}, \struct{M}_i},$$ so indeed, $\tuple{\alg{A}, \struct{N}}$ is in the age of the union of $\tuple{\alg{A}, \struct{M}_i}_{i < \omega}$.

Finally, the union of $\tuple{\alg{A}, \bigcup_{i < \omega}\struct{M}_i}$ must also be homogeneous. We prove this next.

Let $\tuple{\alg{A}, \struct{N}} \subseteq \tuple{\alg{A}, \bigcup_{i < \omega}\struct{M}_i}$ be finitely generated. Then  $\tuple{\alg{A}, \struct{N}} \subseteq \tuple{\alg{A}, \struct{M}_i}$ for some $i$, so we may find an isomorphism $$\tuple{Id, f} \colon\tuple{\alg{A}, \struct{M}_i} \longrightarrow \tuple{\alg{A}, \struct{M}_j^{\prime}}$$ for a finitely generated $ \tuple{\alg{A}, \struct{M}_j^{\prime}} \subseteq  \tuple{\alg{A}, \struct{M}_j}$ and some $j > i$ extending the identity on   $\tuple{\alg{A}, \struct{N}}$. Enumerate the elements of $\bigcup_{i < \omega}{ M}_i \setminus  M_i$  as $x_1, x_2, \dots$  Enumerate the elements of $\bigcup_{i < \omega}{ M}_i \setminus {M}_j^{\prime}$  as $x_1^{\prime}, x_2^{\prime}, \dots$

Define a chain of isomorphisms between finitely generated substructures of  $\tuple{\alg{A}, \bigcup_{i < \omega}\struct{M}_i}$
$$\tuple{Id, f_0} \colon\tuple{\alg{A}, \struct{M}_i}_0 \longrightarrow \tuple{\alg{A}, \struct{M}_j^{\prime}}_0, \tuple{Id, f_1} \colon\tuple{\alg{A}, \struct{M}_i}_1 \longrightarrow \tuple{\alg{A}, \struct{M}_j^{\prime}}_1, \dots
$$
such that $\tuple{\alg{A}, \struct{M}_i}_k$ contains the substructure of $\tuple{\alg{A}, \bigcup_{i < \omega}\struct{M}_i}$ generated by the finite generators of $ \tuple{\alg{A}, \struct{M}_i}$ together with $\{x_1, \dots, x_k\}$ (and similarly for $\tuple{\alg{A}, \struct{M}_j^{\prime}}_k$ and $\{x_1^{\prime}, \dots, x_k^{\prime}\}$) inductively as follows.

{\sc Stage} 0:  Let $\tuple{Id, f_0} = \tuple{Id,f}$.

 {\sc Stage} $ k+ 1$:  Assume now that we have been given $$\tuple{Id, f_{k}} \colon \tuple{\alg{A}, \struct{M}_i}_k \longrightarrow \tuple{\alg{A}, \struct{M}_j^{\prime}}_k.$$ The structure $ \tuple{\alg{A}, \struct{M}_i}_k $   is in $\mathscr{K}$ and is contained in the structure  $ \tuple{\alg{A}, \struct{M}_i}_k^{\prime} $ generated by the finite generators of $ \tuple{\alg{A}, \struct{M}_i}_k$ plus  $\{x_1, \dots, x_{k+1}\}$, so we may find an isomorphism $\tuple{Id, f_{k}^{\prime}} $ from  $ \tuple{\alg{A}, \struct{M}_i}_k^{\prime} $ to some finitely generated extension $\tuple{\alg{A}, \struct{M}_j^{\prime}}_k^{\prime}$
 of $\tuple{\alg{A}, \struct{M}_j^{\prime}}_k$. Similarly, we may consider $\tuple{Id, f_{k}^{\prime -1}} $ to get an isomorphism $\tuple{Id, f_{k}^{\prime -1 \prime}}$ from  the substructure $\tuple{\alg{A}, \struct{M}_j^{\prime}}_k^{\prime \prime}$ of $\tuple{\alg{A}, \bigcup_{i < \omega}\struct{M}_i}$ generated by the finite generators of $\tuple{\alg{A}, \struct{M}_j^{\prime}}_k^{\prime}$ together with $\{x_1^{\prime}, \dots, x_{k+1}^{\prime}\}$ to some finitely generated extension $ \tuple{\alg{A}, \struct{M}_i}_k^{\prime \prime} $ of $ \tuple{\alg{A}, \struct{M}_i}_k^{\prime} $. Finally we put $\tuple{Id, f_{k}^{\prime -1 \prime -1}} = \tuple{Id, f_{k+1}}$, $ \tuple{\alg{A}, \struct{M}_i}_k^{\prime \prime} =  \tuple{\alg{A}, \struct{M}_i}_{k+1}$ and $\tuple{\alg{A}, \struct{M}_j^{\prime}}_k^{\prime \prime} = \tuple{\alg{A}, \struct{M}_j^{\prime}}_{k+1}$.

The union of this chain of mappings provides the desired automorphism of $\tuple{\alg{A}, \bigcup_{i < \omega}\struct{M}_i}$ extending the inclusion on $\tuple{\alg{A}, \struct{N}}$. 

Now let us establish that $\tuple{\alg{A}, \bigcup_{i < \omega}\struct{M}_i}$ is unique. For suppose that $\tuple{\alg{A}, \struct{N}}$ is another such structure.  We will construct a chain $\tuple{Id, f_{n}}_{n < \omega}$ of isomorphisms between finitely generated substructures of  $\tuple{\alg{A}, \bigcup_{i < \omega}\struct{M}_i}$ and $\tuple{\alg{A}, \struct{N}}$ and we will consider the union of such chain as our isomorphism. To this purpose just proceed as in the proof of homogeneity, this time splitting the successor steps between even and odd stages. At odd stages make sure that the domain of the final map will include the totality of the elements of $\tuple{\alg{A}, \bigcup_{i < \omega}\struct{M}_i}$, whereas at even stages make sure that the range of the map will include the totality of the elements of $\tuple{\alg{A}, \struct{N}}$. The isomorphism at stage 0 this time comes from the fact that by assumption $\tuple{\alg{A}, \struct{N}}$ and $\tuple{\alg{A}, \bigcup_{i < \omega}\struct{M}_i}$ have the same age.

For the second part of the theorem, suppose that $\tuple{\alg{A}, \struct{M}}$ is a homogeneous structure.
Consider  a v-formation $$\tuple{\alg{A}, \struct{M}_0} \subseteq \tuple{\alg{A}, \struct{M}_1} \,\,\,\,\,\, \tuple{\alg{A}, \struct{M}_0} \subseteq \tuple{\alg{A}, \struct{M}_2}$$ in $\mbox{Age}(\alg{A}, \struct{M})$. We need to find $ \tuple{\alg{A}, \struct{M}_3} \in \mbox{Age}(\alg{A}, \struct{M})$ and embeddings $$\tuple{Id, f_1}\colon\tuple{\alg{A}, \struct{M}_1} \longrightarrow \tuple{\alg{A}, \struct{M}_3}$$ and $$\tuple{Id, f_2}\colon\tuple{\alg{A}, \struct{M}_2} \longrightarrow \tuple{\alg{A}, \struct{M}_3}$$ which coincide on their images for the elements of $\tuple{\alg{A}, \struct{M}_0}$. But then we can extend the isomorphisms from $\tuple{\alg{A}, \struct{M}_0} $ into a substructure of $\tuple{\alg{A}, \struct{M}_1}$  and from $\tuple{\alg{A}, \struct{M}_0} $ into a substructure of $\tuple{\alg{A}, \struct{M}_2}$ to automorphisms of $\tuple{\alg{A}, \struct{M}}$, $\tuple{Id, g_1}$ and $ \tuple{Id, g_2}$. The amalgam $\tuple{\alg{A}, \struct{M}_3}$ will come from considering the structure with universe $g_1^{-1}(M_1)\cup g_2^{-1}(M_2)$ and evaluation induced by $\tuple{\alg{A}, \struct{M}_1}$ and $\tuple{\alg{A}, \struct{M}_2}$.\qed
\end{proof}

\section{Examples of Fra\"iss\'e classes}\label{4}

A {\em weighted graph} (or {\em $\alg{A}$-weighted graph} as we have called them before) is like a standard graph except that each edge has an associated value from some algebra $\alg{A}$.  Let us describe their Fra\"iss\'e limit.

\begin{Pro} $ \mathscr{K}_1^{\cong}$  is a Fra\"iss\'e class. \end{Pro}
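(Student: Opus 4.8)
The plan is to verify the four hypotheses of Theorem~\ref{thm2}: that $\mathscr{K}_1^{\cong}$ is a countable set of finitely generated structures over the fixed language $\pl$, and that it enjoys HP, JEP and AP; the Fra\"iss\'e property is then immediate. Since the language of $\mathscr{K}_1$ carries no function symbols, the substructure generated by a finite set is that set itself, so the members of $\mathscr{K}_1^{\cong}$ are exactly the isomorphism types of finite $\alg{A}$-weighted graphs, and all of them are finitely generated. As $\alg{A}$ is a fixed \emph{countable} chain, a weighted graph on $n$ vertices is determined by finitely many values drawn from $A$; summing over $n < \omega$ yields only countably many isomorphism types, so $\mathscr{K}_1^{\cong}$ is countable.

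For HP I would note that conditions (1.1) and (1.2) are requirements on the values $\semvalue{a<a}_{\struct{M}}^{\alg{A}}$ and $\semvalue{a<b \rightarrow b<a}_{\struct{M}}^{\alg{A}}$ for elements of the universe, and that, by clause~(3) of the definition of substructure, substructures preserve the truth value of every quantifier-free formula. Hence any substructure, being built on a subset of the universe, inherits both inequalities and remains in $\mathscr{K}_1$. Before treating JEP and AP I would record two elementary facts about the UL-chain $\alg{A}$: by residuation $x \to^\alg{A} y \geq \1^\alg{A}$ iff $x \leq y$ (since $x \conj^\alg{A} \1^\alg{A} = x$), and in particular $v \to^\alg{A} v \geq \1^\alg{A}$ for every $v \in A$. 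Applied to (1.2) together with its instance obtained by swapping $a$ and $b$, the first fact shows that in a weighted graph $\semvalue{a<b}_{\struct{M}}^{\alg{A}} = \semvalue{b<a}_{\struct{M}}^{\alg{A}}$ for all $a,b$; that is, the weighting is symmetric. This is the structural feature that makes free amalgamation available.

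For JEP and AP I would use a single free construction. Given the data to be glued (for JEP, two arbitrary members with disjoint universes; for AP, a v-formation $\tuple{\alg{A}, \struct{M}_0}\subseteq \tuple{\alg{A},\struct{M}_1}$ and $\tuple{\alg{A}, \struct{M}_0}\subseteq \tuple{\alg{A},\struct{M}_2}$, where after renaming $M_1 \cap M_2 = M_0$), let $\struct{M}_3$ have universe $M_1 \cup M_2$, retain the original weights on every pair lying entirely inside $M_1$ or entirely inside $M_2$ (these agree on $M_0$ because $\struct{M}_0$ is a common substructure), and put $\semvalue{a<b}_{\struct{M}_3}^{\alg{A}} = \semvalue{b<a}_{\struct{M}_3}^{\alg{A}} = \0^\alg{A}$ on each genuine cross pair $a \in M_1 \setminus M_0$, $b \in M_2 \setminus M_0$ (any single fixed value of $A$, assigned symmetrically, would serve equally well). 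The inclusions then give embeddings $\tuple{Id, f_1}$ and $\tuple{Id, f_2}$ that fix all weights, hence all quantifier-free values, and that agree on $M_0$. It remains to check $\struct{M}_3 \in \mathscr{K}_1$: the diagonal values are untouched, so (1.1) survives; and (1.2) holds because inside each $M_i$ it is inherited, while on a cross pair it reduces to $\0^\alg{A} \to^\alg{A} \0^\alg{A} \geq \1^\alg{A}$, which is the special case $v \to^\alg{A} v \geq \1^\alg{A}$ recorded above.

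The only step demanding genuine care is AP, and the difficulty is conceptual rather than computational: one must confirm that the freely assigned cross weights can never violate the two defining inequalities. This is transparent once symmetry of the weighting is noted, since the axioms of $\mathscr{K}_1$ are \emph{local} (a condition on the diagonal and a condition on each unordered pair) and are therefore insensitive to how two disjoint parts are glued together. With HP, JEP and AP in hand, Theorem~\ref{thm2} applies and exhibits $\mathscr{K}_1^{\cong}$ as a Fra\"iss\'e class, whose limit is the promised graded analogue of the random graph.
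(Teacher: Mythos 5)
Your proposal is correct and takes essentially the same route as the paper: HP from preservation of quantifier-free truth values under substructures, and JEP and AP both from the free union of the (suitably disjoint or amalgamated) universes with a fixed value such as $\0^\alg{A}$ assigned symmetrically to cross pairs. The paper dismisses these verifications as easy; your residuation observation ($x \to^\alg{A} y \geq \1^\alg{A}$ iff $x \leq y$, hence symmetry of the weighting) and the countability check correctly supply the details it omits.
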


\begin{proof} All the properties follow easily. In particular, HP follows because the properties defining a weighted graph are preserved under the substructure construction. JEP follows by considering the simple union of two weighted graphs defined in the obvious way. AP follows by the same construction.  \qed \end{proof}

\begin{Thm} Let $\tuple{\alg{A}, \struct{M}}$ be a countable weighted graph (where $\alg{A}$ is also countable).  Then the following are equivalent.
\begin{itemize}
\item[(i)]  $\tuple{\alg{A}, \struct{M}}$ is the Fra\"iss\'e limit of $ \mathscr{K}_1$. 

\item[(ii)]  $\tuple{\alg{A}, \struct{M}}$ is the random $\alg{A}$-weighted graph: whenever we have a map $f\colon X \longrightarrow \alg{A}$, where $X \subseteq M$ and $|X| < \omega$,  we can find a vertex $w$ in $\tuple{\alg{A}, \struct{M}}$ such that for each $a \in A$, there are edges connecting the elements of the fiber of $f$ over $a$ to $w$ with weight $a$. 

\end{itemize}
\end{Thm}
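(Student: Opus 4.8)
The plan is to read condition~(ii) as the one-point extension property for $\alg{A}$-weighted graphs and then to run the classical random-graph argument in the graded setting, leaning on the homogeneity and uniqueness already secured by Theorem~\ref{thm2}. Throughout I use that in a \UL-chain $x \to y \geq \1^\alg{A}$ holds iff $x \leq y$, so that axioms (1.1)--(1.2) say exactly that every self-loop value $\semvalue{a<a}_{\struct{M}}^{\alg{A}}$ is strictly below $\1^\alg{A}$ and that edge weights are symmetric.

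For (i)$\Rightarrow$(ii) I would first isolate, from the chain built in the proof of Theorem~\ref{thm2}, the \emph{extension property} of the Fra\"iss\'e limit: whenever $\tuple{\alg{A},\struct{C}}\subseteq\tuple{\alg{A},\struct{B}}$ are members of $\mathscr{K}_1$ and $\tuple{\alg{A},\struct{C}}$ embeds into $\tuple{\alg{A},\struct{M}}$, that embedding extends to one of $\tuple{\alg{A},\struct{B}}$ (the listing of all triples $\langle\tuple{Id,f_{ij}},\struct{N}_{ij},\struct{N}^{\prime}_{ij}\rangle$ in the construction guarantees precisely this). Granting it, given a finite $X\subseteq M$ and a map $f\colon X\to A$, I take $\tuple{\alg{A},\struct{C}}$ to be the substructure induced on $X$ and build the one-point extension $\tuple{\alg{A},\struct{B}}$ by adjoining a fresh vertex $v$ whose edge to each $x\in X$ carries weight $f(x)$ and whose self-loop is assigned any value below $\1^\alg{A}$; by (1.1)--(1.2) this lies in $\mathscr{K}_1$. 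Applying the extension property to the inclusion $\tuple{\alg{A},\struct{C}}\hookrightarrow\tuple{\alg{A},\struct{M}}$ yields an embedding fixing $X$, and the image $w$ of $v$ satisfies $\semvalue{x<w}_{\struct{M}}^{\alg{A}}=f(x)$ for every $x\in X$, which is exactly the fibre condition in (ii).

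For (ii)$\Rightarrow$(i) the strategy is to verify the two features that characterise the limit --- $\mbox{Age}(\alg{A},\struct{M})=\mathscr{K}_1$ and homogeneity --- and then invoke the uniqueness clause of Theorem~\ref{thm2} to identify $\tuple{\alg{A},\struct{M}}$ with the Fra\"iss\'e limit. The inclusion $\mbox{Age}(\alg{A},\struct{M})\subseteq\mathscr{K}_1$ is immediate from HP. For the converse inclusion I would embed an arbitrary finite weighted graph one vertex at a time: after placing the first $k$ vertices as $w_1,\dots,w_k$, I apply (ii) with $X=\{w_1,\dots,w_k\}$ and $f(w_i)$ the prescribed weight to the next vertex, and take the resulting witness as $w_{k+1}$. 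Homogeneity I would then obtain by a back-and-forth argument: starting from an isomorphism between two finitely generated substructures and enumerating $M$, I alternately extend the domain and the range of the partial isomorphism by one element, each time using (ii) to locate a vertex realising the weights that the current partial map forces, and take the union of the chain of partial isomorphisms as the desired automorphism.

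The main obstacle in both halves of (ii)$\Rightarrow$(i) is \emph{faithfulness of the witnesses}: each application of (ii) must return a vertex that is genuinely new (distinct from the finitely many already used) and whose self-loop matches what the target structure demands. Distinctness I would secure by first upgrading (ii) to avoid a prescribed finite set $Z$: enlarging the domain to $X\cup Z$ and demanding weight $\geq\1^\alg{A}$ on the coordinates in $Z\setminus X$ forces the witness off those points, since a vertex joined to some $z$ with weight $\geq\1^\alg{A}$ cannot equal $z$ (its self-loop is $<\1^\alg{A}$ by (1.1)); the same observation, applied with $X=\emptyset$, shows that $\tuple{\alg{A},\struct{M}}$ is infinite. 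The residual delicacy is the value of the witness's self-loop, on which (ii) is silent; this is governed by the diagonal convention built into $\mathscr{K}_1$, and I would make that convention explicit so that the vertex produced by (ii) automatically carries the correct self-loop value. Once faithfulness is in place, the greedy embedding and the back-and-forth proceed exactly as in the classical Rado-graph argument, and uniqueness from Theorem~\ref{thm2} completes the identification.
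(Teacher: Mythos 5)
Your overall route is the same as the paper's. For (i)$\Rightarrow$(ii) the paper builds the same one-point extension $\tuple{\alg{A},\struct{N}}$, embeds it somewhere in $\tuple{\alg{A},\struct{M}}$ via the age, and then uses homogeneity (the map $g'^{-1}g$) to pull the witness back over $X$ --- which is exactly the standard derivation of the extension property you invoke directly from the construction of Theorem~\ref{thm2}; the two packagings are equivalent. For (ii)$\Rightarrow$(i) the paper likewise proves a one-point extension lemma (its claim (a)) by induction, gets $\mbox{Age}(\alg{A},\struct{M})=\mathscr{K}_1$ by starting from the empty graph, and derives homogeneity from (a); your greedy embedding plus back-and-forth is that same argument spelled out, closed off by uniqueness. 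Moreover, your self-loop remark is a genuine catch that the paper's proof silently skips: since (1.1) only requires $\semvalue{a<a}_{\struct{M}}^{\alg{A}}<\1^\alg{A}$, condition (ii) puts no constraint on the witness's loop value, and without a fixed diagonal convention the implication (ii)$\Rightarrow$(i) is actually false --- the Fra\"iss\'e limit of the subclass of weighted graphs whose loops all carry one fixed value $c_0<\1^\alg{A}$ satisfies (ii), yet its age omits graphs with other loop values, so it is not the limit of $\mathscr{K}_1$. Making the convention explicit, as you propose, is a needed amendment, not a pedantic one.

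There is, however, a concrete defect in your freshness mechanism: it is vacuous precisely where it is needed. In the greedy embedding and in the back-and-forth, an embedding must preserve all quantifier-free values and a weighted graph assigns a value to \emph{every} pair, so every already placed vertex carries a prescribed weight to the new one; hence the set $Z$ you need to avoid coincides with the constrained set $X$, and your clause ``demand weight $\geq\1^\alg{A}$ on $Z\setminus X$'' demands nothing. And (ii) can genuinely return $w\in X$: under the diagonal convention with loop value $c$, try to embed the two-vertex graph whose edge has weight $c$; at the second step $X=\{w_1\}$ and $f(w_1)=c=\semvalue{w_1<w_1}_{\struct{M}}^{\alg{A}}$, so $w=w_1$ is a legitimate witness and injectivity fails. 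The point is repairable along your lines, but requires an extra idea, e.g.: first apply (ii) to $X$ with the constant map $c'\neq\1^\alg{A}$ to obtain an auxiliary vertex $p$; if $p\notin X$, rerun (ii) on $X\cup\{p\}$ with the additional demand $\1^\alg{A}$ at $p$ --- a witness $x^*\in X$ would then need $\semvalue{p<x^*}_{\struct{M}}^{\alg{A}}=\1^\alg{A}$, contradicting $\semvalue{x^*<p}_{\struct{M}}^{\alg{A}}=c'$ and the symmetry forced by (1.2); the case $p\in X$ is handled with a second constant $c''\neq c',\1^\alg{A}$ and symmetry again. (To be fair, the paper's own proof of (a) ignores witness freshness altogether and just declares $g'(v)$ to be ``the vertex obtained by (ii)'', so your scrutiny here exceeds the paper's --- but as written your fix would not close the gap.)
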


\begin{proof} $(i) \implies (ii)$: Consider a map $f \colon X \longrightarrow \alg{A}$, where $X \subseteq M$ and $|X| < \omega$. We can build a finite $\alg{A}$-weighted  $\tuple{\alg{A}, \struct{N}}$  as follows: take a new vertex $v$ and let $M = X \cup \{v\}$. The graded relation $R$ of this weighted graph will be such that $\semvalue{Rab}^\alg{A}_\struct{N} = \semvalue{Rab}^\alg{A}_\struct{M}  $ if $a, b \in X$ and $\semvalue{Rav}^\alg{A}_\struct{N} = \semvalue{Rva}^\alg{A}_\struct{N} = f(a) $ for $a \in X$. But then since  $\tuple{\alg{A},\struct{M}}$ is a Fra\"iss\'e limit, we have the existence of an embedding $$\tuple{Id,g} \colon  \tuple{\alg{A}, \struct{N}}  \longrightarrow \tuple{\alg{A},\struct{M}},$$ and hence the restriction of $\tuple{Id, g}$ to  $ \tuple{\alg{A},{\bf X}}$ is an isomorphism between finitely generated substructures of  $\tuple{\alg{A},\struct{M}}$ so it extends to an automorphism $\tuple{Id, g^{\prime}}$ of $\tuple{\alg{A},\struct{M}}$. Then $g^{\prime -1}g(v)$ will be the desired element of $\tuple{\alg{A}, \struct{M}}$. 

$(ii) \implies (i)$: We want to establish the following:

\begin{itemize}
\item[(a)] Take finite  $\alg{A}$-weighted graphs $\tuple{\alg{A}, {\bf G}} \subseteq \tuple{\alg{A}, {\bf H}} $ and consider an embedding $\tuple{Id, g} \colon  \tuple{\alg{A}, {\bf G}}  \longrightarrow \tuple{\alg{A}, \struct{M}} $. Then, we can extend $\tuple{Id,g}$ to an  embedding $\tuple{Id, g^{\prime}} \colon  \tuple{\alg{A}, {\bf H}}  \longrightarrow \tuple{\alg{A}, \struct{M}} $.
\end{itemize}

To prove (a) we proceed by induction on the number $n$ of elements in $H$ not in $G$. Clearly we only need to concern ourselves with the case $n = 1$. Consider the vertex $v$ which is in $H$ but not in $G$, then let $X$ be the collection of vertices $a$ from $G$ such that there is an edge with some weigth assigned between $v$ and $a$ in $\tuple{\alg{A}, {\bf H}}$. Now take $h\colon g[X] \longrightarrow \alg{A}$ to be such that $h(g(x))$  is simply the weight in  $\tuple{\alg{A}, {\bf H}}$ of the edge $\{x, v\}$. We build $\tuple{Id, g^{\prime}}$ by letting $g^{\prime}(v)$ be the vertex in $\tuple{\alg{A}, \struct{M}} $ obtained by (ii).

When $\tuple{\alg{A}, {\bf G}}$ is an empty structure we get that we can embed every finite $\alg{A}$-weighted graph in $\tuple{\alg{A}, \struct{M}}$, so the latter has the same age as the Fra\"iss\'e limit of the class of all finite $\alg{A}$-weighted graphs. Besides with (a) we can establish that the structure is also homogenous.\qed
\end{proof}

Our next example is the class of finite graded total orders.

\begin{Pro}$ \mathscr{K}_2^{\cong}$  is a Fra\"iss\'e class. \end{Pro}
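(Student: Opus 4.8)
The plan is to invoke Theorem~\ref{thm2}. Since $\mathscr{K}_2^{\cong}$ is by construction a \emph{set} of isomorphism types of finite (hence finitely generated) $\alg{A}$-structures in the single language $\pl$, it suffices to check that it is countable and enjoys HP, JEP and AP. Countability is immediate: for each finite cardinality there are only countably many ways to assign values of the countable chain $\alg{A}$ to the pairs of a finite base, so there are only countably many isomorphism types. HP is equally routine, exactly as for $\mathscr{K}_0$: conditions (2.1)--(2.3) are statements about the values of quantifier-free formulas, and by clause (3) of the definition of substructure these values are inherited, so every substructure of a graded total order is again one.

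For JEP I would stack one structure on top of the other. Given disjoint $\tuple{\alg{A},\struct{M}_1}$ and $\tuple{\alg{A},\struct{M}_2}$, take $M_3 = M_1 \cup M_2$, keep the original values inside each $M_i$, and for $a \in M_1$, $b \in M_2$ set $\semvalue{a<b}_{\struct{M}_3}^{\alg{A}} = \top^\alg{A}$ and $\semvalue{b<a}_{\struct{M}_3}^{\alg{A}} = \bot^\alg{A}$. Reflexivity is inherited; totality for a mixed pair holds since $\top^\alg{A}\vee^\alg{A}\bot^\alg{A} = \top^\alg{A} \geq \1^\alg{A}$; and transitivity follows by a short case analysis on how many of $a,b,c$ lie in $M_1$, using only that $\top^\alg{A}$ and $\bot^\alg{A}$ are the top and bottom of the lattice. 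Both inclusions are then embeddings. This is in fact the $M_0=\emptyset$ special case of the amalgamation below.

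The real work is AP, and this is where I expect the main difficulty. The naive ``put $\0^\alg{A}$ on the new pairs'' trick used for $\mathscr{K}_0$ is unavailable, since totality forbids both $\semvalue{x<y}$ and $\semvalue{y<x}$ from dropping below $\1^\alg{A}$. My plan is two-layered. First, passing to the crisp shadow $x \preceq_i y \Leftrightarrow \semvalue{x<y}_{\struct{M}_i}^{\alg{A}} \geq \1^\alg{A}$ turns each $\struct{M}_i$ into a finite total preorder (reflexivity from (2.1); transitivity from (2.2) together with the residuation law $x\to^{\alg{A}}y \geq \1^\alg{A} \Leftrightarrow x \leq y$; totality from (2.3)), and these preorders agree on $M_0$; so by the classical amalgamation of finite total preorders I obtain a total preorder $\preceq$ on $M_3 = M_1\cup_{M_0}M_2$ extending both, after fixing a tie-breaking convention (e.g.\ placing the elements of $M_1\setminus M_0$ before those of $M_2\setminus M_0$ inside each common gap of $M_0$). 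Second, I lift $\preceq$ to graded values: on each $M_i$ I keep the original values, on a mixed pair I seed $\1^\alg{A}$ in the $\preceq$-forward direction and $\bot^\alg{A}$ in the backward direction, and then I close the resulting assignment $w$ under max--min transitive closure, $\semvalue{x<y}_{\struct{M}_3}^{\alg{A}} = \bigvee\{\bigwedge_i w(z_i,z_{i+1}) : x=z_0,\dots,z_k=y\}$.

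Transitivity and reflexivity of the closure are then automatic, and totality survives because the closure only raises values above the seeds. The delicate point---the crux of the whole proposition---is to show that the closure does \emph{not} inflate the values already present inside $M_1$ or $M_2$, for only then are the two canonical maps genuine embeddings coinciding on $M_0$. Here the plan is to show that any value-raising detour can be normalised away: by (2.2) a maximal run of a path inside a single $M_i$ collapses to one edge without lowering the path value; an excursion that enters and leaves $M_2$ through $M_0$ can be rerouted inside $M_1$, since the two structures agree on $M_0$; and a genuinely mixed shortcut across a tie-broken gap is excluded by the skeleton, because forward cross-seeds only connect $\preceq$-increasing pairs while a backward crossing carries $\bot^\alg{A}$. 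The cleanest way to organise this, I expect, is through the threshold total preorders $R_i^{v} = \{(x,y): \semvalue{x<y}_{\struct{M}_i}^{\alg{A}} \geq v\}$ for $v \leq \1^\alg{A}$, proving that level-$v$ reachability in $M_3$ between elements of $M_1$ coincides with $R_1^{v}$. Once that single nontrivial step is in place, the amalgam exists, AP holds, and the proposition follows.
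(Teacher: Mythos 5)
Your proposal is correct and it reaches the result by a genuinely more careful route than the paper. The paper's proof uses the same skeleton you describe---enumerate $M_0$ by its crisp order as $a_1,\dots,a_p$, interleave each gap with the $M_1$-elements placed before the $M_2$-elements, keep original values inside $M_1$ and $M_2$---but then it simply assigns the constant value $\1^\alg{A}$ to the forward cross pairs, leaves the backward and cross-segment values and the tied elements unspecified, and asserts without verification that the identities are embeddings; graded transitivity (2.2) of the amalgam is never checked. Your max--min transitive closure plus the non-inflation lemma repairs exactly the point the paper glosses over: since values strictly above $\1^\alg{A}$ may occur, a configuration with $a\in M_0$, $\semvalue{x<a}_{\struct{M}_1}=\top^\alg{A}$ and $\semvalue{a<y}_{\struct{M}_2}=\top^\alg{A}$ forces $\semvalue{x<y}_{\struct{M}_3}\geq\top^\alg{A}$ by (2.2), which the constant-$\1^\alg{A}$ cross assignment of the paper does not deliver (and setting crossings to $\top^\alg{A}$ instead fails in the other direction, e.g.\ when $\semvalue{b<x}_{\struct{M}_1}=\top^\alg{A}$ but $\semvalue{b<y}_{\struct{M}_2}=\1^\alg{A}$), so the sup-over-paths definition is not optional but necessary. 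Your identification of non-inflation as the crux is also right, and the rerouting argument you sketch does go through: a path between two $M_1$-elements either stays in $M_1$, or crosses via $M_0$ only, in which case the $M_2$-segment collapses by $M_2$-transitivity to a single $M_0$-to-$M_0$ edge on which the structures agree, or uses a forward cross seed, which caps its min at $\1^\alg{A}$ and reduces the claim to the compatibility of the amalgamated crisp preorder with $\preceq_1$, which the classical preorder amalgamation guarantees (do treat ties with $M_0$-elements as genuine tie-classes rather than gap members when fixing the skeleton). In short: the paper buys brevity at the cost of an unverified---and, as stated, false for values above $\1^\alg{A}$---transitivity claim; your two-layer construction costs one extra lemma and is the version of the argument that actually withstands scrutiny. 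Your countability, HP, and JEP observations match the paper (which likewise derives JEP from the AP construction), and your $\top^\alg{A}/\bot^\alg{A}$ stacking for JEP checks out directly.
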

\begin{proof} It is clear that $\mathscr{K}_2^{\cong}$ has HP.

We show next that $\mathscr{K}_2^{\cong}$ has AP. Consider  a v-formation $$\tuple{\alg{A}, \struct{M}_0} \subseteq \tuple{\alg{A}, \struct{M}_1}, \tuple{\alg{A}, \struct{M}_0} \subseteq \tuple{\alg{A}, \struct{M}_2}$$ in $\mathscr{K}_2^{\cong}$. We need to find $ \tuple{\alg{A}, \struct{M}_3} \in \mathscr{K}_2^{\cong}$ and embeddings $$\tuple{Id, f_1} \colon \tuple{\alg{A}, \struct{M}_1} \longrightarrow \tuple{\alg{A}, \struct{M}_3}$$ and $$\tuple{Id, f_2} \colon \tuple{\alg{A}, \struct{M}_2} \longrightarrow \tuple{\alg{A}, \struct{M}_3}$$ which coincide on their images for the elements of $\tuple{\alg{A}, \struct{M}_0}$. We let $M_3 = M_1 \cup M_2$ and keep the same order for elements of $M_0$ that we had in $\tuple{\alg{A}, \struct{M}_0}$ (i.e., the ordering relation takes exactly the same values for pairs of elements from $M_0$ as in $\tuple{\alg{A}, \struct{M}_0}$). Moreover with the ordering of $\tuple{\alg{A}, \struct{M}_0}$ paying attention to just the pairs of elements that take value $\1^\alg{A}$ in the order, we can list the elements of $M_0$ according to this order as $a_1, \dots, a_p$. Now consider the elements $x_0, \dots x_m \in M_1$ such that $\semvalue{ x_j < a_1 }^\alg{A}_{\bf M_1} = \1^\alg{A} \ (j \leq m)$ and 
$y_0, \dots y_n \in M_2$ such that $\semvalue{ y_j < a_1 }^\alg{A}_{\bf M_2} =  \1^\alg{A} \ (j \leq n)$. Now keep the values of the ordering for $\{x_0, \dots x_m, a_1, \dots, a_p\}$ that they had in $\tuple{\alg{A}, \struct{M}_1}$ and similarly for $\{y_0, \dots y_n, a_1, \dots, a_p\}$ and  $\tuple{\alg{A}, \struct{M}_2}$. On the other hand, put $\semvalue{x_i < y_j}^\alg{A}_{\bf M_3}= \1^\alg{A}$ where  $i \leq n$ and $ j \leq n$. Now consider $a_j$ and $a_{j+1}$ ($j\leq p-1$), and let $x^{\prime}_0, \dots x^{\prime}_m \in M_1$ be such that $\semvalue{ a_j < x^{\prime}_i < a_{j+1}}^\alg{A}_{\bf M_1} = \1^\alg{A} \ (i \leq m)$ and 
$y^{\prime}_0, \dots y^{\prime}_n \in M_2$ such that $\semvalue{  a_j < y^{\prime}_i < a_{j+1} }^\alg{A}_{\bf M_2} =  \1^\alg{A} \ (i \leq n)$. We again proceed as before keeping the evaluations from the original models $ \tuple{\alg{A}, \struct{M}_1}$ and  $\tuple{\alg{A}, \struct{M}_2}$ and adding that $\semvalue{x^{\prime}_i < y^{\prime}_j}^\alg{A}_{\bf M_3}= \1^\alg{A}$ where  $i \leq n$ and $ j \leq n$. Finally, take the elements $x_0, \dots x_m \in M_1$ such that $\semvalue{ a_p < x_j  }^\alg{A}_{\bf M_1} = \1^\alg{A} \ (j \leq m)$ and 
$y_0, \dots y_n \in M_2$ such that $\semvalue{ a_p  < y_j   }^\alg{A}_{\bf M_2} =  \1^\alg{A} \ (j \leq n)$ and evaluate in $\tuple{\alg{A}, \struct{M}_3}$ as we have been doing so far. Now by the linearity of the ordering, every element of $M_1$ and $M_2$ would have appeared at some point during our evaluation process, and since we kept the same evaluation from the original models   $ \tuple{\alg{A}, \struct{M}_1}$ and  $\tuple{\alg{A}, \struct{M}_2}$ when only elements from one of these two models were involved, the identity on $M_1$ or $M_2$ will gives us the desired embeddings.

From the above argument we can also deduce that $\mathscr{K}_2^{\cong}$ has the JEP.\qed
\end{proof}

We end with yet one more Fra\"iss\'e class of graded structures.

\begin{Pro}
$\mathscr{K}_3^{\cong}$  is a Fra\"iss\'e class.
\end{Pro}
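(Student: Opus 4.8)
The plan is to verify the three defining properties of a Fra\"iss\'e class for $\mathscr{K}_3^{\cong}$ and then invoke Theorem~\ref{thm2}. Throughout I would write $a \preceq_\struct{M} b$ for the crisp relation ``$\semvalue{a<b}_{\struct{M}}^{\alg{A}} \geq \1^\alg{A}$''; conditions (3.1)--(3.3) say precisely that $\preceq_\struct{M}$ is a partial order on $M$, so a member of $\mathscr{K}_3$ is a finite set carrying a graded relation whose designated cut is a poset. Countability of $\mathscr{K}_3^{\cong}$ is immediate, since $\alg{A}$ is countable and the structures are finite, so there are only countably many isomorphism types. For HP, if $\tuple{\alg{A},\struct{N}}$ is a substructure of a member of $\mathscr{K}_3$, then by clause (3) of the definition of substructure the value $\semvalue{a<b}$ is unchanged for $a,b \in N$; hence $\preceq$ merely restricts to $N$, and a restriction of a partial order is a partial order.

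The main work is AP, and here lies the one genuine obstacle. Given a v-formation $\tuple{\alg{A},\struct{M}_0}\subseteq\tuple{\alg{A},\struct{M}_1}$ and $\tuple{\alg{A},\struct{M}_0}\subseteq\tuple{\alg{A},\struct{M}_2}$, I would take the base of $\struct{M}_3$ to be the pushout $M_1 \cup_{M_0} M_2$ (identifying the two copies of $M_0$; concretely $M_1 \cup (M_2\setminus M_0)$). On pairs lying inside $M_1$ I keep the values of $\struct{M}_1$ and on pairs inside $M_2$ those of $\struct{M}_2$, which agree on $M_0$ since $M_0$ is a common substructure. The delicate point is the genuinely mixed pairs $a \in M_1\setminus M_0$, $c \in M_2\setminus M_0$: here the naive assignment $\0^\alg{A}$ that sufficed for the preorders of $\mathscr{K}_0$ \emph{fails}, because a bridge $a \preceq_{\struct{M}_1} b \preceq_{\struct{M}_2} c$ through some $b \in M_0$ would break transitivity of the cut. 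Instead I would set $\semvalue{a<c}_{\struct{M}_3} = \1^\alg{A}$ exactly when such a bridging $b \in M_0$ exists, and $\semvalue{a<c}_{\struct{M}_3} = \0^\alg{A}$ otherwise (symmetrically for $c \in M_2\setminus M_0$, $a \in M_1\setminus M_0$). This is precisely the free amalgamation of the two posets $\preceq_{\struct{M}_1}$, $\preceq_{\struct{M}_2}$ over their common subposet $\preceq_{\struct{M}_0}$.

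It then remains to check that $\preceq_{\struct{M}_3}$ is again a partial order, i.e.\ that $\tuple{\alg{A},\struct{M}_3}\in\mathscr{K}_3$, which is the heart of the argument. Reflexivity (3.1) is clear. For antisymmetry (3.3), if $a \in M_1\setminus M_0$ and $c \in M_2\setminus M_0$ satisfied both $a \preceq c$ and $c \preceq a$, one chases the witnessing elements $b,b' \in M_0$ back through $M_1$ and $M_2$ and uses antisymmetry inside $M_0$ and then inside $M_1$ to force $a \in M_0$, a contradiction. For transitivity (3.2) one splits into cases according to which of the blocks $M_0$, $M_1\setminus M_0$, $M_2\setminus M_0$ the three points lie in; every case reduces to transitivity inside a single $\struct{M}_i$, the only nontrivial routes passing through $M_0$ being handled by the very definition of the mixed values together with the poset structure of $\preceq_{\struct{M}_0}$. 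The inclusions then supply embeddings $\tuple{Id,f_1}$ and $\tuple{Id,f_2}$ agreeing on $M_0$, since all quantifier-free values on tuples drawn from a single $\struct{M}_i$ are preserved verbatim by construction.

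Finally, JEP is the special case of AP in which $\tuple{\alg{A},\struct{M}_0}$ is the empty structure (which belongs to $\mathscr{K}_3$ and embeds into every member): then there are no bridging elements, every mixed value is $\0^\alg{A}$, and $\preceq_{\struct{M}_3}$ is the disjoint union of two posets, hence a poset. With HP, JEP and AP established, Theorem~\ref{thm2} yields that $\mathscr{K}_3^{\cong}$ is a Fra\"iss\'e class. I expect the only real difficulty to be AP---concretely, recognizing that one must \emph{not} use the constant $\0^\alg{A}$ on mixed pairs but rather the free poset amalgam, and then verifying that this keeps the designated cut transitive and antisymmetric; the remaining clauses are routine bookkeeping. (In the degenerate case $\0^\alg{A}\not<\1^\alg{A}$ no two-element antichain exists and $\mathscr{K}_3$ reduces to singletons, so the statement is trivial and one may assume $\0^\alg{A}<\1^\alg{A}$ throughout.)
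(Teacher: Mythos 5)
Your proof follows essentially the same route as the paper's: the identical free amalgam over $M_0$ (mixed pairs $a \in M_1\setminus M_0$, $c \in M_2\setminus M_0$ get a designated value exactly when a bridging element of $M_0$ exists, and $\0^\alg{A}$ otherwise), the same case analysis for transitivity of the designated cut, and the same witness-chasing argument forcing $a = x = y \in M_0$ for antisymmetry; you additionally make JEP explicit via the empty amalgamation base, a point the paper passes over in silence for $\mathscr{K}_3$. The only inaccuracy is your closing parenthetical: if $\0^\alg{A} \geq \1^\alg{A}$ the class does \emph{not} collapse to singletons, since antichains may be realized by any non-designated value (e.g.\ $\bot^\alg{A}$, assuming $\bot^\alg{A} < \1^\alg{A}$), and the right repair is simply to use such a value instead of $\0^\alg{A}$ on mixed pairs---a caveat that applies equally to the paper's own construction, so it does not separate your argument from theirs.
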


\begin{proof} Clearly, $\mathscr{K}_3^{\cong}$ has HP. We show next that $\mathscr{K}_3^{\cong}$ has AP.

Consider a v-formation $$\tuple{\alg{A},\struct{M}_0} \subseteq \tuple{\alg{A}, \struct{M}_1},\,\,\,\,\,\,\, \tuple{\alg{A},\struct{M}_0} \subseteq \tuple{\alg{A},\struct{M}_2}$$ in $\mathscr{K}_3^{\cong}$. We need to find $\tuple{\alg{A},\struct{M}_3} \in \mathscr{K}_3^{\cong}$ and embeddings $$\tuple{Id, f_1}\colon \tuple{\alg{A},\struct{M}_1} \longrightarrow \tuple{\alg{A}, \struct{M}_3}$$ and $$\tuple{Id, f_2}\colon \tuple{\alg{A},\struct{M}_2} \longrightarrow \tuple{\alg{A}, \struct{M}_3}$$ which coincide on their images for the elements of $\tuple{\alg{A}, \struct{M}_0}$. We put $M_3 = M_1 \cup M_2$. Now let $a, b \in M_3$, there are a few possibilities:
\begin{itemize}
\item[] $a, b \in M_1$: $\semvalue{ a < b }^\alg{A}_{\struct{M}_3}= \semvalue{ a < b }^\alg{A}_{\struct{M}_1}$
\item[] $a, b \in M_1$: $\semvalue{ a < b }^\alg{A}_{\struct{M}_3}= \semvalue{ a < b }^\alg{A}_{\struct{M}_2}$
\item[]$a \in M_1 \setminus M_0$, $ b \in M_2 \setminus M_0$:   $\semvalue{ a < b }^\alg{A}_{\struct{M}_3} = \1^\alg{A}$ iff there is $x \in M_0$, $\semvalue{ a < x }^\alg{A}_{\struct{M}_1} \geq \1^\alg{A}$ and $\semvalue{  x < b }^\alg{A}_{\struct{M}_2} \geq \1^\alg{A}$ (otherwise, $\semvalue{ a < b }^\alg{A}_{\struct{M}_3} = \0^\alg{A}$); $\semvalue{  b < a }^\alg{A}_{\struct{M}_3} = \1^\alg{A}$ iff there is $x \in M_0$, $\semvalue{ a > x }^\alg{A}_{\struct{M}_1}  \geq \1^\alg{A}$ and $\semvalue{ x > b }^\alg{A}_{\struct{M}_2}  \geq \1^\alg{A}$  (otherwise, $\semvalue{ b < a }^\alg{A}_{\struct{M}_3} = \0^\alg{A}$).

\end{itemize}
The fact that $ \tuple{\alg{A}, \struct{M}_3} \in \mathscr{K}_3^{\cong}$ can be shown as follows. Obviously, $\semvalue{ a < a }^\alg{A}_{\struct{M}_3} \geq \1^\alg{A}$. For (3.2), given $a, b, c \in M_3$, we have to show that $\semvalue{ a < b }^\alg{A}_{\struct{M}_3}, \semvalue{ b < c }^\alg{A}_{\struct{M}_3} \geq \1^\alg{A}$ only if $  \semvalue{ a <  c }^\alg{A}_{\struct{M}_3} \geq \1^\alg{A}$. We take care of a few cases as an example. Suppose that $a \in M_1 \setminus M_0, b \in M_0$ and $c \in M_2 \setminus M_0$.  If $\semvalue{ a < b  }^\alg{A}_{\struct{M}_3} \geq \1^\alg{A}$ and $\semvalue{ b < c  }^\alg{A}_{\struct{M}_3} \geq \1^\alg{A}$, in fact, $\semvalue{ a < c }^\alg{A}_{\struct{M}_3} =  \1^\alg{A} $. Another interesting case here is when $a \in M_1, b \in M_1  \setminus M_0$ and $c \in M_2 \setminus M_0$. We may assume that  $\semvalue{a < b  }^\alg{A}_{\struct{M}_3} = \semvalue{ a < b  }^\alg{A}_{\struct{M}_1}\geq \1^\alg{A}$ and $\semvalue{b < c  }^\alg{A}_{\struct{M}_3}\geq \1^\alg{A}$, by construction of $ \tuple{\alg{A}, \struct{M}_3}$, then $\semvalue{ b < c  }^\alg{A}_{\struct{M}_3} = \1^\alg{A}$, so  there is $x \in M_0$, $\semvalue{ b < x }^\alg{A}_{\struct{M}_1} \geq \1^\alg{A}$ and $\semvalue{  x < c }^\alg{A}_{\struct{M}_2} \geq \1^\alg{A}$. But we have that $\semvalue{ a < x }^\alg{A}_{\struct{M}_1} \geq \1^\alg{A}$, so if $a \in M_1 \setminus M_0$, $\semvalue{ a <  c }^\alg{A}_{\struct{M}_3} = \1^\alg{A}$ and if $a \in M_0$, then since $\semvalue{ a < x }^\alg{A}_{\struct{M}_2}  = \semvalue{ a < x }^\alg{A}_{\struct{M}_1}  \geq \1^\alg{A}$, we must have that $\semvalue{ a < c }^\alg{A}_{\struct{M}_2} \geq \1^\alg{A}$.

Now, suppose for reductio that $\semvalue{ a < b }^\alg{A}_{\struct{M}_3}  \geq \1^\alg{A}$, $\semvalue{b < a }^\alg{A}_{\struct{M}_3}  \geq \1^\alg{A}$ and $a \neq b$. The only interesting case is when we assume w.l.o.g.\ that $a \in M_1 \setminus M_0$ and  $b \in M_2 \setminus M_0$. Then we have some $x \in M_0$ such that $\semvalue{ a < x }^\alg{A}_{M_1}  \geq \1^\alg{A}$
and  $\semvalue{b > x }^\alg{A}_{\struct{M}_2}  \geq \1^\alg{A}$. Furthermore, we have some $y \in M_0$ such that $\semvalue{ b < y }^\alg{A}_{\struct{M}_2}  \geq \1^\alg{A}$
and  $\semvalue{ a > y }^\alg{A}_{\struct{M}_1}  \geq \1^\alg{A}$. Then by (3.2) we may get that $\semvalue{ x > y }^\alg{A}_{\struct{M}_1} =   \semvalue{ x > y }^\alg{A}_{\struct{M}_2} \geq \1^\alg{A}$ and  $\semvalue{x < y }^\alg{A}_{\struct{M}_1} =   \semvalue{x < y }^\alg{A}_{M_2} \geq \1^\alg{A}$, which by (3.3) gives that $x=y$. But then  $\semvalue{ a < x }^\alg{A}_{\struct{M}_1}  \geq \1^\alg{A}$ and $\semvalue{ a > y }^\alg{A}_{\struct{M}_1}  \geq \1^\alg{A}$ imply by (3.3) that $a = x = y$. And similarly, $b = x = y$, so $a = b $, which is a contradiction.\qed
\end{proof}

\section{Conclusion}\label{5}

In this paper we have seen how one can export to the realm of graded structures the idea of constructing structures from smaller parts. Namely, we have shown how to adapt the argument for the construction of  Fra\"iss\'e limits to the setting of graded  or many-valued structures. 

It would be interesting to find further applications of the main result to meaningful classes of structures other than the examples studied in  this article. Furthermore, is it possible to find a limit construction that would preserve the property of safety of structures? We could probably add further conditions to the classes of structures in our main theorem to ensure safeness of the limit, but it be more desirable to find another construction that could work without the need for auxiliary properties.
%\section*{Acknowledgment}

\section*{Acknowledgments}
We are grateful to the anonymous referees for their corrections and remarks.
Guillermo Badia is supported by the project I 1923-N25 of the Austrian Science Fund (FWF). Carles Noguera is supported by the project GA17-04630S of the Czech Science Foundation (GA\v{C}R) and has also received funding from the European Union's Horizon 2020
research and innovation programme under the Marie Sklodowska-Curie grant
agreement No 689176 (SYSMICS project).

\end{document}